\theoremstyle{plain}
\newtheorem{The}{Theorem}
\newtheorem*{The*}{Theorem}
\newtheorem{Pro}{Proposition}
\newtheorem*{Cor*}{Corollary}
\theoremstyle{definition}
\newtheorem{Rem}{Remark}
\newtheorem*{Rem*}{Remark}
\numberwithin{equation}{section}
\newcommand{\R}{\mathbb{R}}
\newcommand{\C}{\mathbb{C}}
\newcommand{\N}{\mathbb{N}}
\newcommand{\Z}{\mathbb{Z}}
\newcommand{\CP}{\mathbb{CP}}
\DeclareMathOperator{\End}{End}
\DeclareMathOperator{\SL}{SL}
\DeclareMathOperator{\SU}{SU}
\DeclareMathOperator{\Tr}{tr}
\DeclareMathOperator{\pdeg}{pdeg}
\renewcommand{\Im}{\operatorname{Im}}
\renewcommand{\Re}{\operatorname{Re}}
\newcommand{\dvector}[1]{{\left(\begin{matrix}#1\end{matrix}\right)}}
\newcommand{\bbR}{\mathbb{R}}
\newcommand{\bbS}{\mathbb{S}}
\newcommand{\bbC}{\mathbb{C}}
\newcommand{\Cstar}{{\bbC^\times}}
\begin{document}

\title{Deformations of symmetric CMC surfaces in the 3-sphere}

\author{Sebastian Heller}

\author{Nicholas Schmitt}

\address{Sebastian Heller \\
Institut f\"ur Mathematik\\
Universit{\"a}t T\"ubingen\\ Auf der Morgenstelle
10\\ 72076 T¬ubingen\\ Germany
}

\address{Nicholas Schmitt \\
Institut f\"ur Mathematik\\
Universit{\"a}t T\"ubingen\\ Auf der Morgenstelle
10\\ 72076 T¬ubingen\\ Germany
}

\email{heller@mathematik.uni-tuebingen.de}
\email{nschmitt@mathematik.uni-tuebingen.de}

\date{\today}

\begin{abstract} 
We numerically construct CMC deformations of the Lawson minimal surfaces $\xi_{g,1}$
using a DPW approach to CMC surfaces in spaceforms.
\end{abstract}

\maketitle

\section*{Introduction}
\label{intro}

The study of minimal surfaces in 3-dimensional space forms is among
the oldest subjects in differential geometry.  While minimal surfaces
in euclidean 3-space are never compact, there exist compact minimal
surfaces in $\bbS^3$. In fact, Lawson~\cite{L} has shown that there exist
embedded closed minimal surfaces in the 3-sphere, known as
the Lawson surfaces $\xi_{p,q}$ of genus $g=pq$ for $p,q\in\N$. A
slightly more general surface class is that of constant mean
curvature (CMC) surfaces.  Due to the Lawson correspondence, the
partial differential equations describing minimal and CMC surfaces in
$\bbS^3$ and $\bbR^3$ can be treated in a uniform way.  Compact minimal and
CMC surfaces in $\bbS^3$ of genus $0$ and $1$ are well understood:
because CMC spheres have vanishing Hopf differential they must
be totally umbilic; similarly, CMC tori have constant Hopf differential and thus cannot have umbilic points. 
 Brendle~\cite{B} recently proved the Lawson
conjecture, that the only embedded minimal torus in the 3-sphere is the
Clifford torus. Using Brendle's ideas, Andrews and Li~\cite{AL} have
classified all embedded CMC tori in $\bbS^3$. Additionally, all CMC
immersions from a torus into a $3$-dimensional spaceform are given
rather explicitly in terms of algebro-geometric data of their
associated spectral curves~\cite{PS, Hi2, Bo}.  These integrable
system methods also apply to the study of the moduli space of all CMC
tori, see for example~\cite{KS,KSS}. 

Besides the Lawson surfaces, there also exist other embedded (minimal)
examples in $\bbS^3$~\cite{KPS} as well as immersed CMC examples in
$\R^3$~\cite{K1,K2} for all genera, constructed by
implicit methods from geometric analysis.  Methods from geometric
analysis have also been applied to the study of the moduli space of CMC
surfaces of higher genus. For example, it follows from the work of Kusner,
Mazzeo and Pollack~\cite{KMP} that the moduli space of embedded CMC
surfaces in $\bbS^3$
(without fixing the value of the mean curvature)
is a real analytic variety of formal dimension $1$.
Nevertheless, the global structure of the space of embedded CMC
surfaces in $\bbS^3$ is not well understood. It is also not clear whether
there exist higher-dimensional families of compact CMC surfaces of
genus $g \geq 2$ (with fixed mean curvature) analogous to the
isospectral families of (immersed but not embedded)
CMC tori parametrized by the real part of the
Jacobian of their spectral curve.

In this paper, we report on the experimental construction of families
of compact embedded higher genus CMC surfaces in $\bbS^3$ based on
recently developed integrable system methods~\cite{He1,He2,He3}.  This
approach is based on the associated family of flat
$\text{SL}(2,\mathbb{C})$-connections
$\nabla^\lambda$ \eqref{associated_family} on a fixed hermitian rank
$2$ bundle~\cite{Hi2}.  The connections $\nabla^\lambda$ are unitary
for $\lambda\in \bbS^1\subset\Cstar$ and trivial at two \emph{Sym
points} $\lambda_1\neq\lambda_2\in \bbS^1$. The immersion is recovered
from $\nabla^\lambda$ as the gauge between $\nabla^{\lambda_1}$ and
$\nabla^{\lambda_2}$, and its mean curvature is
$H=i\frac{\lambda_1+\lambda_2}{\lambda_1-\lambda_2}$.

The direct construction of the associated family of flat connections
of a CMC surface is difficult. Easier is the construction of a related
family of flat connections $\tilde\nabla^\lambda$ which determines
$\nabla^\lambda$ by loop group factorization methods.  The family
$\tilde\nabla^\lambda$ must satisfy weaker conditions only, for
example the connections $\tilde\nabla^\lambda$ are only unitarizable,
that is, unitary with respect to a $\lambda$-dependent metric along the
unit circle (see theorem~\ref{general_reconstruction}).  The DPW
approach to (simply-connected) CMC surfaces~\cite{DPW} employes this
insight to construct CMC surfaces in terms of a so-called DPW
potential, a loop of meromorphic $\mathfrak{sl}(2,\C)$-valued
connection $1$-forms.  The DPW construction was adapted to compact CMC
surfaces of genus $g\geq2$ in~\cite{He1,He2}.

For compact CMC surfaces of genus $g\geq2$, the connections
$\tilde\nabla^\lambda$ must be irreducible for generic
$\lambda\in\Cstar$ (see~\cite{He1}). Hence it not known
how to compute the corresponding family of monodromy representations.
Note that a flat connection is unitarizable if and only if its
monodromy representation is unitary modulo conjugation.  Thus it
is not known how to construct appropriate families of flat connections
$\tilde\nabla^\lambda$ which are unitarizable along the unit circle.
But this intrinsic closing condition can be tackled numerically: using
numerical ODE solvers one can compute the monodromy representation to
determine whether a connection is unitarizable.  In order to construct
compact CMC surfaces numerically we have implemented a minimizer which
approximates families of unitarizable DPW potentials.  We have carried
out the DPW experiments for Lawson symmetric surfaces.  These are
equipped with a large group of ambient
orientation preserving
symmetries.  One feature of these embeddings is that their
curvature lines are closed (figure~\ref{fig:lawson}).

\begin{figure}
\centering
\includegraphics[width=0.495\textwidth]{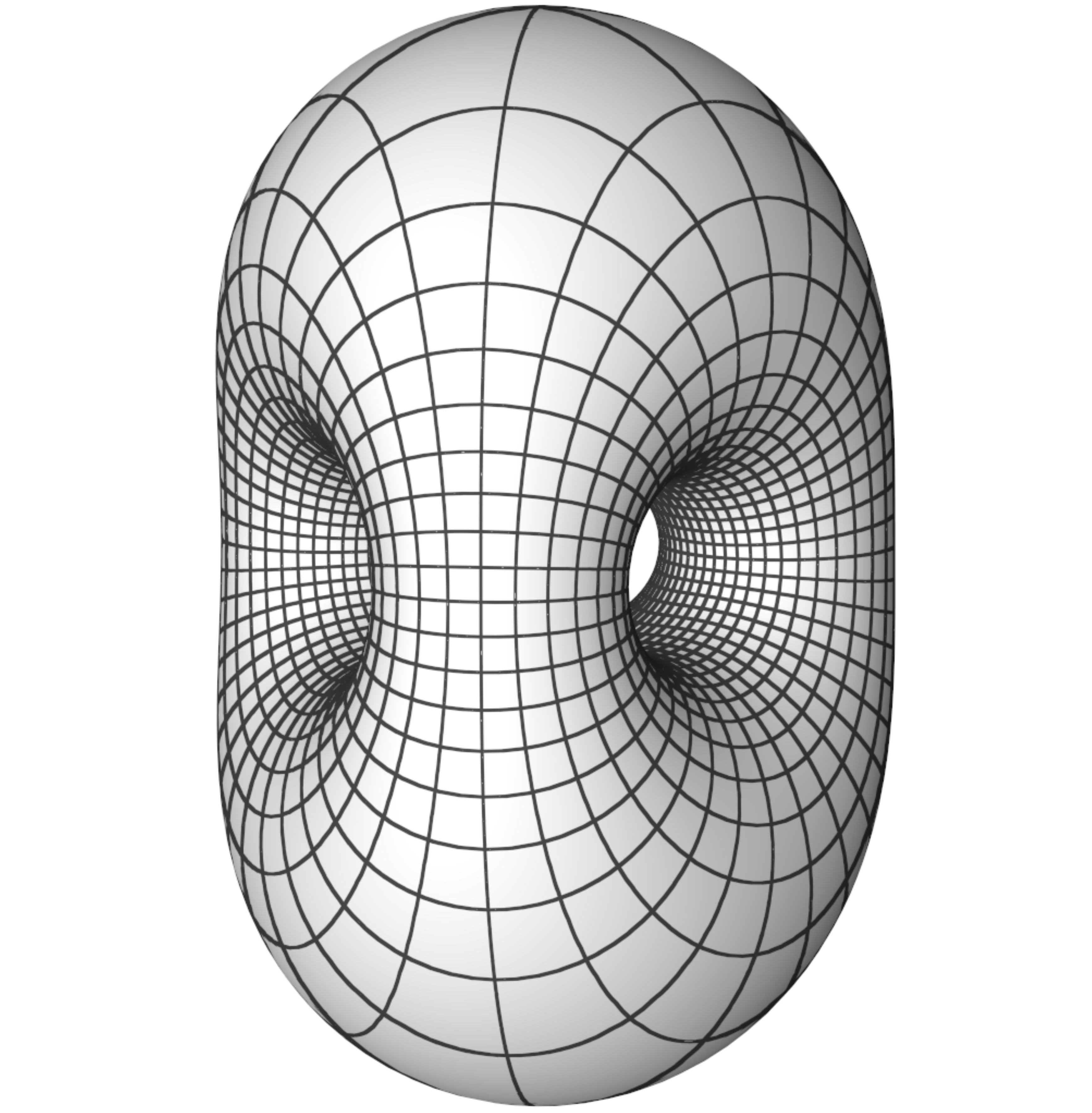}
\includegraphics[width=0.495\textwidth]{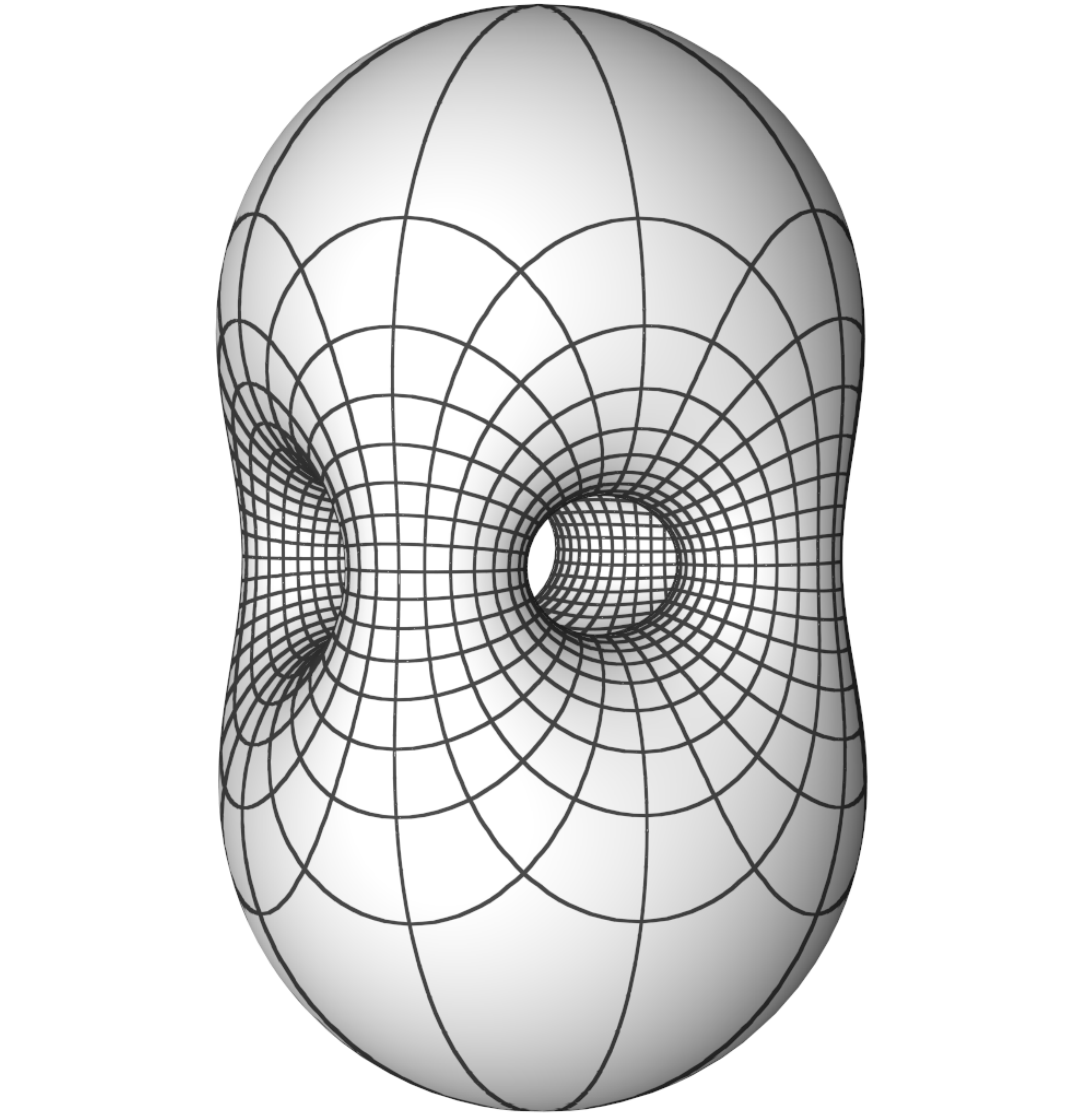}
\vspace{-0.2cm}
\includegraphics[width=0.495\textwidth]{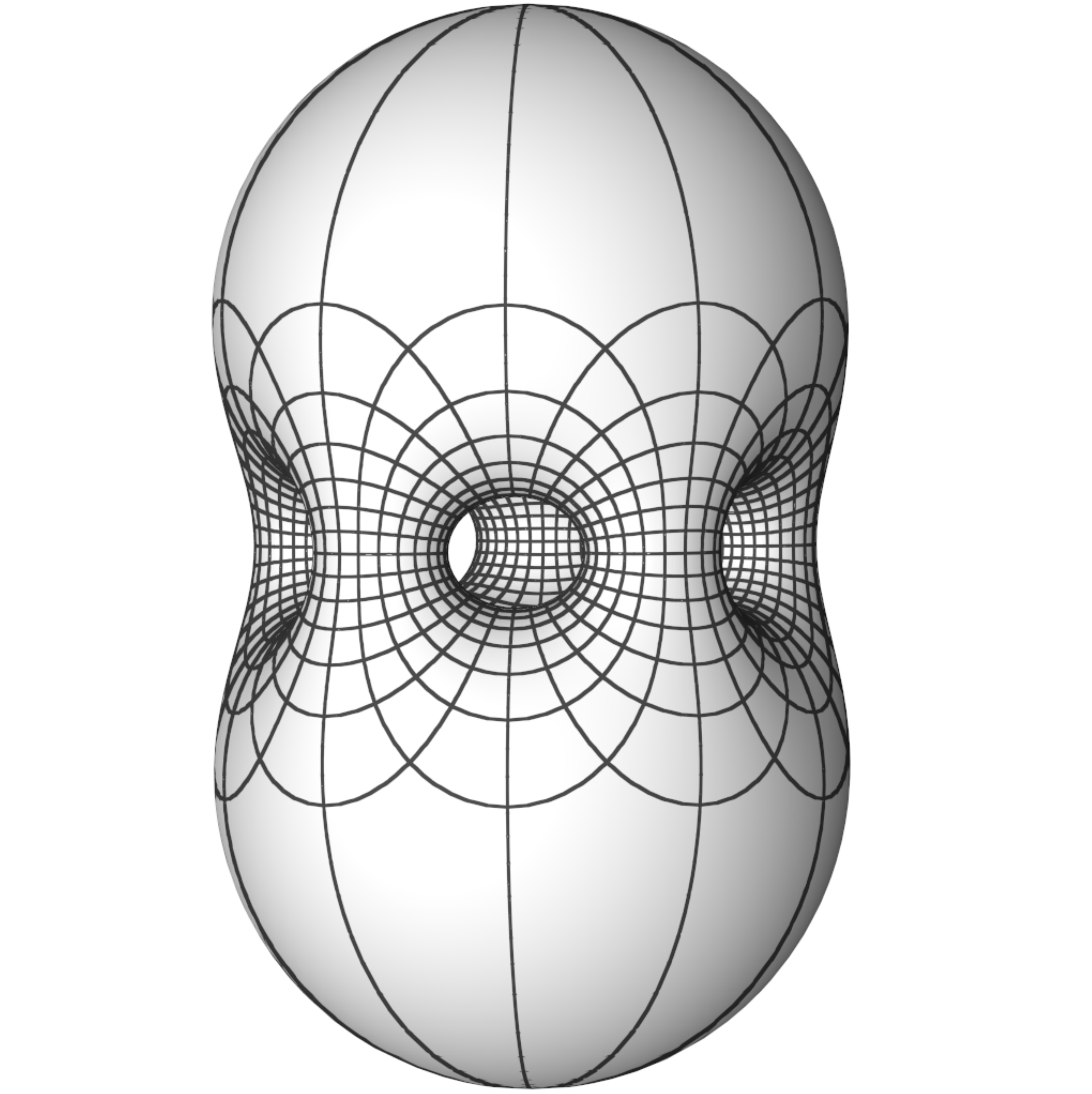}
\includegraphics[width=0.495\textwidth]{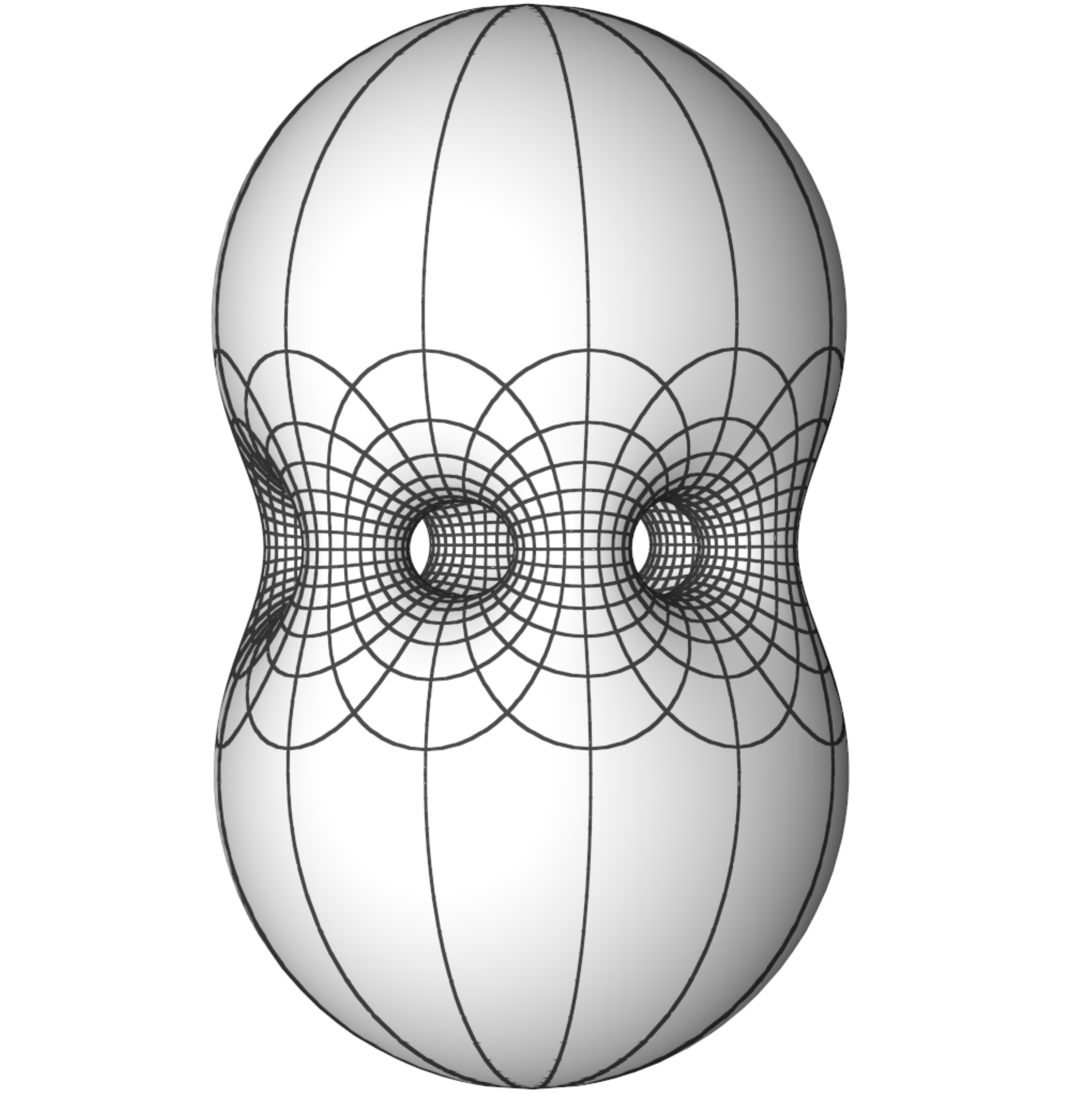}
\caption{
\footnotesize
The Lawson minimal surfaces $\xi_{g,1}$ in $\bbS^3$ of genus $g=2,\,3,\,4,\,5$.
All surfaces are shown are steregraphically projected from $\bbS^3$ to $\bbR^3$
and are depicted with curvature lines.
}
\label{fig:lawson}
\end{figure}

In our experiments, we realized the expected $1$-dimensional family of
CMC deformations of the Lawson surfaces $\xi_{g,1}$ as numerically
computed conformal embeddings (figure~\ref{fig:lawsoncmc1}).  In
the case of $g=1$ this family consists of the well-known homogeneous
CMC tori.  The numerical computation of this $1$-parameter family of
CMC tori via the (non-abelian) DPW method serves as a test for our
method.  We also observed the bifurcation of the homogeneous CMC tori
into the family of 2-lobed Delaunay tori of spectral genus 1 and the
continued family of homogeneous CMC tori~\cite{KSS}.  For higher genus
Lawson symmetric CMC surfaces such bifurcations did not appear; these
families continue until they collapse into double coverings of minimal
spheres (as do the Delaunay tori)
while their conformal types degenerate. In genus $2$ we also found a second
family of Lawson symmetric CMC surfaces which converges to a 
necklace of three CMC spheres (figure~\ref{fig:lawsoncmc2a}).  This
second family serves as the analog to the Delaunay family in the case
of genus $2$ surfaces but it is disjoint from the first family passing
through $\xi_{2,1}$. Altogether, our experiments begin to map out the
moduli space of Lawson symmetric CMC surface of genus $2$
(figure~\ref{fig:graph}).
It is worth noting that none of
our numerically constructed surfaces, if they indeed exist, would
allow for isospectral deformations --- deformations of CMC surfaces
such that the corresponding flat connections of their associated
families are gauge equivalent for generic $\lambda\in\Cstar$
(section~\ref{outlook}).  We expect that
higher-dimensional families of compact CMC surfaces of higher genus
are isospectral deformations, as is the case for tori.

As was pointed out to us by the anonymous reviewer,
existence of further Lawson symmetric CMC surfaces of genus $2$ in
$\bbS^3$ can be deduced
from gluing methods by Kapouleas and related analytic methods.
We expect such surfaces to exist for more than one unstable or strictly semi-stable parabolic structure inside the unit
disc (see section \ref{gen_spec_gen}).
We plan to construct such surfaces by investigating the relationship between
the spectral curve theory for tori~\cite{Hi2}
and for Lawson symmetric CMC surfaces of genus $2$~\cite{He3}
by implementing a flow between their DPW potentials.

The paper is organized as follows. In section~\ref{theory} we recall
the necessary theory for our experiments.  In section~\ref{sec:experiment} we
discuss the computational aspects of our studies and report on the
experiments concerning Lawson symmetric CMC surfaces. In the last
section~\ref{outlook} we give a short theoretical outlook.

The authors would like to thank the anonymous referee for helpful comments. 
Both authors were supported by the DFG project grants HE 6829/1-1
and PE 1535/6-1.

\section{Theoretical background}
\label{theory}

 \subsection{The associated family of flat connections}
We briefly recall the gauge-theoretic description of conformal CMC
immersions $f\colon M\to \bbS^3$ from a Riemann surface into the round
3-sphere~\cite{Hi2,Bo,He1}. Due to the Lawson correspondence, there is
a unified treatment for all mean curvatures $H\in\bbR$:

\begin{The}
\label{The1}
Let $f\colon M\to \bbS^3$ be a conformal CMC immersion. Then there exists
an associated family of flat $\SL(2,\C)$ connections
\begin{equation}\label{associated_family}
\lambda\in\Cstar\mapsto \nabla^\lambda=\nabla+\lambda^{-1}\Phi-\lambda\Phi^*
\end{equation}
on a hermitian rank $2$ bundle $V\to M$. The family $\nabla^\lambda$
is unitary along $\bbS^1\subset\Cstar$ and trivial at
$\lambda_1\neq\lambda_2\in \bbS^1$. Here, $\Phi$ is a nowhere vanishing
complex linear $1$-form which is nilpotent and $\Phi^*$ is its
adjoint.  Conversely, the immersion $f$ is given as the gauge between
$\nabla^{\lambda_1}$ and $\nabla^{\lambda_2}$ where we identify
$\SU(2)=\bbS^3$, and its mean curvature is
$H=i\frac{\lambda_1+\lambda_2}{\lambda_1-\lambda_2}$. Therefore, every
family of flat $\SL(2,\C)$ connections satisfying the conditions above
determines a conformal CMC immersion.
\end{The}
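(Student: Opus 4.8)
The plan is to prove Theorem~\ref{The1} in two directions, first constructing the associated family from a given CMC immersion, then recovering the immersion from an abstract family satisfying the stated conditions. For the forward direction, I would start from the Levi-Civita connection of the pullback metric and the Hopf differential data. Writing $f\colon M\to\bbS^3$ as a conformal CMC immersion, I would equip the trivial $\C^2$-bundle $V\to M$ (or the rank-$2$ bundle associated to the spin structure) with its natural hermitian metric and decompose the flat connection coming from the Maurer--Cartan form of $\bbS^3=\SU(2)$. The key objects are the $(1,0)$-part $\Phi$ encoding the Hopf differential together with the conformal factor, and its hermitian adjoint $\Phi^*$, which is the $(0,1)$-part. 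I expect $\Phi$ to be nilpotent because it arises from the second fundamental form written in a null frame adapted to the conformal structure, and nowhere vanishing because $f$ is an immersion.

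The central computation is to verify that the connection $\nabla^\lambda=\nabla+\lambda^{-1}\Phi-\lambda\Phi^*$ is flat for every $\lambda\in\Cstar$. Expanding the curvature $F^{\nabla^\lambda}$ in powers of $\lambda$ produces five terms, with coefficients of $\lambda^{-2},\lambda^{-1},\lambda^0,\lambda^1,\lambda^2$. The extreme terms $\lambda^{\pm2}$ vanish automatically from $\Phi\wedge\Phi=0$ and $\Phi^*\wedge\Phi^*=0$ (here nilpotency and the fact that $\Phi$ is a $1$-form both contribute). The $\lambda^{\pm1}$ terms encode $\dbar^\nabla\Phi=0$ and $\del^\nabla\Phi^*=0$, which I would identify with the Codazzi equations and the holomorphicity of the Hopf differential. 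The $\lambda^0$ term, $F^\nabla-\Phi\wedge\Phi^*-\Phi^*\wedge\Phi$, is precisely the Gauss equation relating the curvature of $\nabla$ to the CMC condition. Thus flatness for all $\lambda$ is equivalent to the Gauss--Codazzi equations, which hold because $f$ is a genuine immersion into the space form. Unitarity along $\bbS^1$ follows because for $|\lambda|=1$ the term $\lambda^{-1}\Phi-\lambda\Phi^*=\lambda^{-1}\Phi-\overline{\lambda^{-1}\Phi}^{\,*}$ is skew-hermitian, so $\nabla^\lambda$ preserves the hermitian metric.

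For the Sym points, I would show that at two distinguished values $\lambda_1\neq\lambda_2\in\bbS^1$ the connection $\nabla^\lambda$ becomes trivial (gauge equivalent to the trivial flat connection $d$), reflecting that the immersion closes up into $\bbS^3$; the precise relation $H=i\frac{\lambda_1+\lambda_2}{\lambda_1-\lambda_2}$ comes from matching the normalization of $\Phi$ against the Lawson correspondence shift in mean curvature. The converse direction reverses the argument: given an abstract family of the stated form, flatness at all $\lambda$ forces the Gauss--Codazzi equations, so by the fundamental theorem of surface theory there is an immersion realizing this data; concretely, trivializing $\nabla^{\lambda_1}$ and $\nabla^{\lambda_2}$ by parallel frames $F_1,F_2\colon \tilde M\to\SU(2)$ and setting $f=F_1 F_2^{-1}$ (using $\SU(2)=\bbS^3$) produces the conformal CMC immersion.

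I expect the main obstacle to be bookkeeping the identification of the $\lambda$-coefficients of the curvature with the classical Gauss--Codazzi system in a frame-independent, gauge-covariant way, and in particular pinning down the correct normalizations so that the Sym-point formula for $H$ emerges with the right constant. The flatness computation itself is mechanical once the decomposition is fixed, but keeping the adjoint $\Phi^*$ and the reality conditions consistent with the chosen hermitian structure --- so that unitarity on $\bbS^1$ and triviality at the Sym points hold simultaneously --- is the delicate part. I would therefore treat the Gauss--Codazzi translation and the Sym-point normalization as the heart of the proof, referring to~\cite{Hi2,Bo,He1} for the standard parts of the dictionary.
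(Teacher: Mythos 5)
The paper offers no proof of Theorem~\ref{The1}: it is recalled as known, with the argument deferred to \cite{Hi2,Bo,He1}. Your sketch follows exactly the standard route taken in those references --- expand $F^{\nabla^\lambda}$ in powers of $\lambda$, identify the coefficients with the Gauss--Codazzi system, get unitarity on $\bbS^1$ from skew-hermiticity of $\lambda^{-1}\Phi-\lambda\Phi^*$, and recover $f$ as $F_1F_2^{-1}$ from parallel frames at the Sym points --- so there is nothing to contrast with the paper itself. One small correction: the $\lambda^{\pm2}$ terms $\Phi\wedge\Phi$ and $\Phi^*\wedge\Phi^*$ vanish purely for type reasons ($\Phi$ is a $(1,0)$-form on a Riemann surface, so $\Phi\wedge\Phi$ contains $dz\wedge dz=0$); nilpotency is not needed there, but rather is the condition encoding conformality of the resulting immersion (together with $\Phi$ nowhere vanishing giving the immersion property), and this is where your converse direction needs it when checking that the gauge between $\nabla^{\lambda_1}$ and $\nabla^{\lambda_2}$ is a conformal immersion rather than merely a map to $\SU(2)$.
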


It is well known~\cite{Hi2} that for compact CMC surfaces which are
not totally umbilic the generic connection $\nabla^\lambda$ of the
associated family is not trivial. Moreover, for CMC immersions from a
compact Riemann surface of genus $g\geq2$, the generic connection
$\nabla^\lambda$ of the associated family is irreducible~\cite{He1}.

Fundamental for our approach to CMC surfaces is the following
generalization of theorem~\ref{The1}:
\begin{The}\label{general_reconstruction}
Let $U\subset \C$ be an open set containing the disc of
radius $1+\epsilon$ centered at $0$. Let $\lambda\in
U\setminus\{0\}\mapsto\tilde\nabla^\lambda$ be a holomorphic family of
flat $\SL(2,\C)$ connections on a rank $2$ bundle $V\to M$ over a
compact Riemann surface $M$ of genus $g\geq2$ such that
\begin{itemize}
\item the asymptotic at $\lambda=0$ is given by
\[\tilde\nabla^\lambda\sim \lambda^{-1}\Psi+\tilde\nabla+...\]
where $\Psi\in\Gamma(M,K\End_0(V))$ is nowhere vanishing and
nilpotent;
\item the intrinsic closing condition holds: for all $\lambda\in
\bbS^1\subset U\subset \C$ there is a hermitian metric on $V$ such that
$\tilde\nabla^\lambda$ is unitary with respect to this metric;
\item the extrinsic closing condition holds: $\tilde\nabla^\lambda$ is
trivial for $\lambda_1\neq\lambda_2\in \bbS^1$.
\end{itemize}
Then there exists a unique CMC surface $f\colon M\to \bbS^3$ of mean
curvature $H=i\frac{\lambda_1+\lambda_2}{\lambda_1-\lambda_2}$ such
that its associated family of flat connections $\nabla^\lambda$ and
the family $\tilde\nabla^\lambda$ are gauge equivalent, i.e., there
exists a $\lambda$-dependent holomorphic family of gauge
transformations $g$ which extends through $\lambda=0$ such that
$\nabla^\lambda\cdot g(\lambda)=\tilde\nabla^\lambda$ for all
$\lambda\in U\setminus\{0\}$.
\end{The}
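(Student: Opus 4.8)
The plan is to reconstruct the associated family $\nabla^\lambda$ from $\tilde\nabla^\lambda$ by finding a $\lambda$-dependent gauge transformation that simultaneously (i) promotes the family to one that is genuinely unitary along $\bbS^1$ with respect to a \emph{fixed} metric, and (ii) extends holomorphically through $\lambda=0$ with the correct asymptotic. Since by theorem~\ref{The1} any family satisfying the hypotheses of that theorem produces a CMC immersion, and conversely, the entire content here is to manufacture such a family gauge-equivalent to $\tilde\nabla^\lambda$.

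\emph{First} I would address the intrinsic closing condition. For each $\lambda\in\bbS^1$ the hypothesis supplies a hermitian metric $h_\lambda$ making $\tilde\nabla^\lambda$ unitary; the metric is unique up to scale because for $g\geq2$ the generic connection is irreducible, so its unitary structure is rigid. The task is to organize the $h_\lambda$ into a smooth (indeed real-analytic) loop and to absorb them into a loop of gauge transformations $r(\lambda)$ taking values in the positive self-adjoint endomorphisms, so that $\tilde\nabla^\lambda\cdot r(\lambda)$ becomes unitary with respect to one reference metric. This is an $r$-factor of an Iwasawa (or Birkhoff-type) loop group splitting $\Lambda\SL(2,\C)=\Lambda_{\bbS^1}\SU(2)\cdot\Lambda^+\SL(2,\C)$ applied to the family; the irreducibility and the holomorphic dependence on $\lambda$ guarantee the splitting varies real-analytically in the loop parameter.

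\emph{Second} I would treat the behavior at $\lambda=0$. The positive factor $r(\lambda)$ obtained on $\bbS^1$ must be extended to a holomorphic family of gauge transformations on the disc so that the gauged connection $\nabla^\lambda:=\tilde\nabla^\lambda\cdot g(\lambda)$ extends across $\lambda=0$ and acquires the normal form $\nabla+\lambda^{-1}\Phi-\lambda\Phi^*$ of \eqref{associated_family}, with $\Phi$ nilpotent and nowhere vanishing. Here one uses the prescribed leading term $\lambda^{-1}\Psi$: the nilpotent $\Psi$ already matches the $\lambda^{-1}$-pole structure required of $\Phi$, and the positive-loop factorization is precisely what converts the asymptotic expansion $\lambda^{-1}\Psi+\tilde\nabla+\cdots$ into the reality-symmetric associated-family form. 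Combining $r(\lambda)$ with this holomorphic dressing yields the full gauge $g(\lambda)$.

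\emph{Finally} the extrinsic closing condition transfers directly: since triviality of a connection is a gauge-invariant property, $\nabla^{\lambda_1}$ and $\nabla^{\lambda_2}$ are trivial exactly because $\tilde\nabla^{\lambda_1},\tilde\nabla^{\lambda_2}$ are, so theorem~\ref{The1} applies to $\nabla^\lambda$ and produces the CMC immersion $f$ with the stated mean curvature. \emph{Uniqueness} follows from the rigidity of the unitary structure together with the normalization at $\lambda=0$. I expect the \textbf{main obstacle} to be the analytic control of the intrinsic step: proving that the pointwise-in-$\lambda$ unitarizing metrics assemble into a real-analytic loop whose Iwasawa positive factor extends holomorphically through $\lambda=0$ with exactly the required singular behavior. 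Irreducibility of the generic connection is essential here, but the degenerate (reducible) values of $\lambda$ and the extension across $\lambda=0$ are where the real work lies, and one likely invokes the loop group results underlying~\cite{He1,He2} rather than re-deriving the factorization from scratch.
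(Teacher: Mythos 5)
Your proposal follows essentially the same route as the paper, which does not prove the theorem itself but defers to the loop-group proof in~\cite{He3}: unitarize the family along $\bbS^1$ and absorb the $\lambda$-dependent metrics into the positive factor of an Iwasawa decomposition, which simultaneously handles the extension through $\lambda=0$ and produces the associated family in the normal form of theorem~\ref{The1} (exactly the mechanism the paper alludes to in remark~\ref{generalized_reconstruction}). Your sketch correctly identifies both the strategy and the genuine analytic difficulty (real-analytic dependence of the unitarizing structure and the behaviour of the positive factor at $\lambda=0$), which is precisely the content carried by the cited reference.
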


In the above form, this theorem was proven in~\cite{He3}, but there
are earlier variants which are more adapted to the DPW approach to
$k$-noids~\cite{SKKR,DW}.
\begin{Rem}\label{generalized_reconstruction}
Theorem~\ref{general_reconstruction} remains true if there exist
$\lambda$-independent apparent singularities of the connections
$\tilde\nabla^\lambda$. It also remains true if there are finitely
many points on the unit circle at which the monodromy fails to be
unitarizable (this situation can occur when the gauge transformation
between $\nabla^\lambda$ and $\tilde\nabla^\lambda$ has apparent
singularities in $\lambda$).  In both cases, the corresponding
singularities (on the Riemann surface in the first case, and on the
spectral plane in the second) are captured in the positive part of the
Iwasawa decomposition (see the proof of this theorem
in~\cite{He3}). Therefore, the actual associated family of flat
connections $\nabla^\lambda$ has no singularities anymore, and the CMC
immersion is well-defined.
\end{Rem}

\subsection{Lawson symmetric CMC surfaces}\label{LSS}
From now on we focus on CMC immersions from compact Riemann surfaces
of genus $2$ which have the following extrinsic symmetries:
\begin{itemize}
\item an involution $\varphi_2$ with $6$ fixed points which commutes with the 
other symmetries;
\item a $\Z_3$ symmetry generated by $\varphi_3$ with $4$ fixed points;
\item an involution $\tau$ with $2$ fixed points;
\item
optionally, a reflection across a great 2-sphere perpendicular to the
surface.
\end{itemize}
Surfaces with the first three symmetries,
which preserve the orientation of $\bbS^3$ and the surface,
are called \emph{Lawson symmetric CMC surfaces} (of genus $2$),
and those with all four symmetries are called \emph{rectangular Lawson symmetric surfaces.}

Due to the symmetry breaking of a stereographic projection it is not
possible to visualize all symmetries simultaneously in euclidean 3-space.
One can see the $\Z_3$ symmetry of the surfaces in
figure~\ref{fig:lawsoncmc1} and figure~\ref{fig:lawsoncmc2a} and the
symmetries $\varphi_2$ and $\tau$ in
figure~\ref{fig:lawsoncmc2b}.  The Lawson symmetries already
fix the Riemann surface structure up to one complex parameter.
 All
Lawson symmetric Riemann surfaces are given by an algebraic equation
of the form
\begin{equation}
\label{rss}
y^3=\frac{z^2-z_0^2}{z^2-z_1^2};
\end{equation}
see \cite{He2} for more details.
Lawson symmetric Riemann surfaces corresponding to tuples
$(z_0,z_1,-z_0,-z_1)$ with the same cross-ratio are isomorphic.  The
Riemann surface structure of the Lawson surface $\xi_{2,1}$ is given
by $z_0=1$, $z_1=i$. In terms of the coordinates $(y,z)$ the
symmetries of a Lawson symmetric Riemann surface are given by
$\varphi_2(y,z)=(y,-z)$, $\varphi_3(y,z)=(e^{\frac{2}{3}\pi i}y,z)$ and
$\tau(y,z)=((\frac{z_0}{z_1})^{\frac{2}{3}}\frac{1}{y},\frac{z_0 z_1}{z})$. 
For rectangular Lawson symmetric surfaces,
the Riemann surface structure is determined up to one real
parameter
(see also the corresponding discussion in \cite{He2}).
In fact,
after a M\"obius transformation,
the antiholomorphic symmetry of the $z$-plane
corresponding to the reflection of $\bbS^3$ can
be taken to be $z\mapsto\bar z$
(so $z_1 = \bar z_0$),
and the branch points can be normalized to
satisfy $|z_0| = |z_1| = 1$.

\subsection{The DPW approach}
Dorfmeister, Pedit and Wu~\cite{DPW} developed a loop group
factorization approach to CMC surfaces in $\bbS^3$ based on
$\lambda$-dependent holomorphic $\mathfrak{sl}(2,\C)$-valued
connection $1$-forms $\eta$, so called {\em DPW potentials}.  This
theory can be seen as a special instance of
theorem~\ref{general_reconstruction}.

In order to construct compact CMC surfaces of higher genus
(numerically) we are going to use a global version of the DPW approach
as put forward in ~\cite{He1,He2}. Because the monodromy
representation of the associated family of flat connections of a
compact CMC surface of genus $g\geq1$ varies non-trivially 
in $\lambda$,
the connection $1$-forms must have poles: on a compact
Riemann surface $M$ the only unitarizable holomorphic connection on
the trivial holomorphic bundle $\underline\C^2\to M$ is the trivial
connection $d$. Thus we need to use meromorphic DPW potentials $\eta$
in order to find families of flat connections
$\hat\nabla^\lambda=d+\eta(\lambda)$ satisfying the conditions of
theorem~\ref{general_reconstruction}.

For an arbitrary closed Riemann surface of genus $g \geq 2$
the general form of a DPW potential for CMC surfaces with
the given Riemann surface structure is unclear.  In the case of
the Lawson surface of genus $2$, the existence and precise form of a
DPW potential up to two unknown functions in $\lambda$ was determined
in~\cite{He2}.  In the slightly more general situation of Lawson
symmetric CMC surfaces of genus $2$ one can prove by the same methods
as in~\cite{He2} that a DPW potential is given by the pullback via $\pi$  of
\begin{equation}
\label{DPW_potential}
\eta = \eta_{A,B} =
\dvector{-\frac{2}{3}\frac{z(2z^2-z_0^2-z_1^2)}{(z^2-z_0^2)(z^2-z_1^2)}
+\frac{A}{z}  
& \lambda^{-1}-\frac{(A+\frac{2}{3})(A-\frac{1}{3})}{B}z^2 \\
 \frac{B}{(z^2-z_0^2)(z^2-z_1^2)}
 -\frac{\lambda A(A+1) z_0^2 z_1^2}{z^2(z^2-z_0^2)(z^2-z_1^2)}
 &\frac{2}{3}\frac{z(2z^2-z_0^2-z_1^2)}{(z^2-z_0^2)(z^2-z_1^2)}
-\frac{A}{z}}dz.
\end{equation}
Here, $A,B$ are $\lambda$-dependent holomorphic functions on a
neighborhood of $\lambda=0$, the Riemann surface $M$ is determined by
\eqref{rss} and $\pi\colon M\to \CP^1$ is the 3-fold covering
branched over $\pm z_0,\pm z_1$. All poles of $d+\pi^*\eta$ are apparent on
$M$, i.e., the local monodromy around every pole is trivial. On the
quotient $M/\Z_3=\CP^1$ the poles at $z=0$ and $z=\infty$ are still
apparent whereas the conjugacy classes of the monodromies around the
branch images $\pm z_0$ and $\pm z_1$ are given by a third root of
unity.

\subsection{The extrinsic closing condition}
The functions $A$ and $B$ in \eqref{DPW_potential} need to be chosen
in such a way that the intrinsic and extrinsic closing conditions
 are satisfied for $d+\eta_{A(\lambda),B(\lambda)}$.  As was
proven in~\cite{He2}, there do not exist finite values for $A$, $B$
and $\lambda$ such that the holonomy of $d+\eta_{A,B}$ is
trivial. Nevertheless, there exist values for $A$ and $B$ such that
the monodromy is upper triangular. This observation yields generalized
extrinsic closing conditions:
 
\begin{Pro}
\label{sym_point_condition}
Consider a Lawson symmetric Riemann surface given by $z_0,z_1\in\C$ and
$\lambda_1,\lambda_2\in \bbS^1\subset\C$. Let $A,B\colon U\to\C$ be
holomorphic functions on an open set $U$ containing the closed unit
disc.  If the connections $d+\eta_{A(\lambda),B(\lambda)}$ have
unitarizable monodromy representation for all $\lambda\in
\bbS^1\setminus\{\lambda_1,\lambda_2\}$ and if the functions $A$ and $B$
are related at the Sym points $\lambda_1$ and $\lambda_2$ by
\begin{equation}
\label{sym_point_condition_1}
B(\lambda_k)=S_k(\lambda_k)\, \, \text{ and }\, \, B'(\lambda_k)=S_k'(\lambda_k)
\end{equation}
where $S_k(\lambda)=z_k^2 \lambda R(\lambda)$ with
$R(\lambda)=A(\lambda)(A(\lambda)-\frac{1}{3})$, then there exists a
closed Lawson symmetric CMC surface in $\bbS^3$ such that the flat
connections $\nabla^\lambda$ of its associated family are gauge
equivalent to $d+\eta_{A(\lambda),B(\lambda)}$ for generic $\lambda\in
U\setminus\{0,\lambda_1,\lambda_2\}$.
\end{Pro}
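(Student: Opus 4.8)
The plan is to realize the family $\tilde\nabla^\lambda := d+\pi^*\eta_{A(\lambda),B(\lambda)}$ as an admissible family for Theorem~\ref{general_reconstruction} and to invoke Remark~\ref{generalized_reconstruction} at the two Sym points. Since $A,B$ are holomorphic on $U$ and $\eta_{A,B}$ is meromorphic in $z$ with at most a simple pole in $\lambda$ at $\lambda=0$, each $\tilde\nabla^\lambda$ is a flat $\SL(2,\C)$-connection (on a Riemann surface a meromorphic connection $1$-form is automatically flat), and the family is holomorphic on $U\setminus\{0\}$. The leading term as $\lambda\to0$ is $\lambda^{-1}\left(\begin{smallmatrix}0&1\\0&0\end{smallmatrix}\right)dz$, which is nilpotent; that it is nowhere vanishing as a section of $K\End_0(V)$ for the holomorphic structure on $V$ induced by the reconstruction is exactly the assertion that \eqref{DPW_potential} is a genuine DPW potential, established by the same computation as in~\cite{He2}. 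This verifies the asymptotic hypothesis.

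The intrinsic closing condition is hypothesized on $\bbS^1\setminus\{\lambda_1,\lambda_2\}$, and Remark~\ref{generalized_reconstruction} permits unitarizability to fail at the finitely many points $\lambda_1,\lambda_2$; indeed the monodromy there will turn out to be a nontrivial unipotent, hence reducible but not unitarizable. It therefore remains only to establish the (generalized) extrinsic closing, namely that the associated family $\nabla^\lambda$ produced by the reconstruction is trivial at $\lambda_1,\lambda_2$. This is the heart of the argument.

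For the extrinsic closing I would compute the $\pi_1(M)$-monodromy of $\tilde\nabla^\lambda$ explicitly. Descending via the Lawson symmetries to $M/\Z_3=\CP^1$ and using that the poles at $0,\infty$ are apparent, the representation is controlled by the four regular singular points $\pm z_0,\pm z_1$, whose residues carry exponents $\pm\tfrac13$ so that the local monodromies lie in the conjugacy class of a primitive third root of unity and become trivial on the $3$-fold cover. Let $\mu(\lambda)$ denote the eigenvalue of the monodromy along the cycle whose triviality effects the closing. From the reduced (hypergeometric-type) equation studied in~\cite{He2} one reads off that $\mu(\lambda)=1$ holds exactly along the locus $B=S_k(\lambda)$, the apparent exponents at $0,\infty$ (governed by $A$) being what normalizes the eigenvalue to $1$ rather than to another root of unity; in particular $B(\lambda_k)=S_k(\lambda_k)$ forces the monodromy of $\tilde\nabla^{\lambda_k}$ to be unipotent, i.e.\ upper triangular with eigenvalue $1$, in accordance with the remark preceding the proposition that no finite $(A,B,\lambda)$ renders it trivial.

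To upgrade this to the extrinsic closing for the associated family I would invoke the derivative condition. The reconstruction gauge $g(\lambda)$ relating $\tilde\nabla^\lambda$ to $\nabla^\lambda$ involves a square root of $\mu$, so it extends holomorphically through $\lambda_k$ precisely when $\mu-1$ vanishes to second order there. Since $\{B=S_k\}$ is the locus $\{\mu=1\}$, the two conditions $B(\lambda_k)=S_k(\lambda_k)$ and $B'(\lambda_k)=S_k'(\lambda_k)$ are together equivalent to this double zero, which is exactly the situation of Remark~\ref{generalized_reconstruction}: the non-unitarizable unipotent monodromy of $\tilde\nabla^{\lambda_k}$ is absorbed into the positive Iwasawa factor and the resulting $\nabla^{\lambda_k}$ is trivial. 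Having verified all three hypotheses, Theorem~\ref{general_reconstruction} (in the form of Remark~\ref{generalized_reconstruction}) produces the desired Lawson symmetric CMC surface gauge equivalent to $d+\eta_{A(\lambda),B(\lambda)}$ for generic $\lambda\in U\setminus\{0,\lambda_1,\lambda_2\}$. The main obstacle is this last equivalence --- proving that first-order contact of $B$ with $S_k$ is precisely what renders the $\lambda$-singularity of $g$ apparent rather than essential --- which rests on the explicit monodromy and loop-group factorization computations of~\cite{He2}.
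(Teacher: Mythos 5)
Your proposal follows essentially the same route as the paper: both reduce the statement to Theorem~\ref{general_reconstruction} via Remark~\ref{generalized_reconstruction}, both identify the condition $B(\lambda_k)=S_k(\lambda_k)$ with upper-triangularity (reducibility) of the monodromy at the Sym points, both use the derivative condition $B'(\lambda_k)=S_k'(\lambda_k)$ to obtain a second-order vanishing that renders the $\lambda$-singularity of the reconstruction gauge apparent, and both defer the explicit verification to the computations of theorem 4.5 in~\cite{He2}. The only notable difference is presentational: the paper locates the double zero in the lower-left entry of the gauged Fuchsian system and applies an explicit diagonal gauge, singular in $\lambda$ at $\lambda_k$, to reach a system that is diagonal at $\lambda_k$ and hence has trivial monodromy after pullback to the threefold cover $M$, whereas you phrase the same mechanism in terms of the monodromy eigenvalue $\mu$ and a square root --- the paper's formulation is the more precise account of how the nontrivial unipotent monodromy is absorbed.
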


\begin{proof}
Using the methods of the proof of theorem 4.5 in~\cite{He2} one can
verify that a suitable gauge transformation of the flat connection
$d+\xi_{A(\lambda_k),B(\lambda_k)}$ is a Fuchsian system of upper
triangular form. Moreover, its lower left entry vanishes up to order
$2$ at $\lambda=\lambda_k$. Such a family of Fuchsian systems can be
gauged by a $\lambda$-dependent gauge which is singular at $\lambda_k$
into a family of Fuchsian systems which is diagonal at $\lambda_k$.
The pullback of this diagonal Fuchsian system to $M$ has trivial
monodromy.  Hence, by applying a suitable gauge transformation which
is singular at the Sym points $\lambda=\lambda_1,\lambda_2$ we
obtain from $d+\xi_{A(\lambda),B(\lambda)}$ a family of flat
$\SL(2,\C)$ connections satisfying the conditions of
theorem~\ref{general_reconstruction}.
\end{proof}

\subsection{The intrinsic closing condition}
In contrast to the extrinsic closing condition
the intrinsic closing condition cannot be treated algebraically--- they require a numerical search.
We briefly describe the necessary theory.
 
Since the DPW potential
$d+\eta_{A,B}$ has an apparent singularity at $z=0$
we apply a
lower triangular gauge as in the proof of theorem 4.5 in~\cite{He2} and
obtain a meromorphic potential $d+\tilde\eta_{A,B}$ which is smooth at
$z=0$. Moreover, it satisfies
\[\varphi_2^*(d+\tilde\eta_{A,B})=J(d+\tilde\eta_{A,B})J^{-1}\]
where $\varphi_2$ is given by $z\mapsto-z$ and $J=\text{diag}(i,-i)$ is the
diagonal matrix with entries $i,-i$. This implies that at $z=0$ the
monodromy matrices $M_1$, $M_2$, $M_3$ and $M_4$ around the poles
$z_0$, $z_1$ $-z_0$ and $-z_1$ with respect to the standard basis of
$\C^2$ satisfy
\begin{equation}\label{J_sym}
M_3=JM_1J^{-1}\, \text{ and } \,M_4=JM_2J^{-1}.\end{equation}
All these matrices are in $\SL(2,\C)$ and of trace $-1$ as the
eigenvalues of the residues of the connections $d+\tilde\eta_{A,B}$
are $\pm\frac{1}{3}$. We denote the half-traces of the products by
\[t_{ij} = \tfrac{1}{2}\Tr(M_i M_j).\]
The following proposition gives a
characterization of unitarizable representations for this case:

\begin{Pro}
\label{four_punctured_monodromy_test}
Consider four matrices $M_k\in\SL(2,\C)$ with trace $\text{tr} (M_k)=c\in(-2,2)$ satisfying \eqref{J_sym}
such that they have no
common eigenline.
Then they are simultaneously unitarizable if and
only if $t_{kl}\in[-1,1]$ for all $k,l\in\{1,..,4\}$.
\end{Pro}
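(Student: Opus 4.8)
The forward implication is immediate: if a single conjugation $g\in\GL(2,\C)$ carries all four $M_k$ into $\SU(2)$, then every product $M_kM_l$ also lies in $\SU(2)$, so $\Tr(M_kM_l)\in[-2,2]$ and hence $t_{kl}\in[-1,1]$. So I would only need to treat the converse, and I would phrase unitarizability as the existence of a positive definite hermitian matrix $H$ with $M_k^*HM_k=H$ for all $k$.

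First I would reduce the four matrices to the pair $M_1,M_2$ using the symmetry \eqref{J_sym}. Since $J\in\SU(2)$ and $J^2=-\Id$, conjugation by $J$ maps the set $\{M_1,M_2,M_3,M_4\}$ to itself, so $\Ad_J$ is an automorphism of $\Gamma:=\langle M_1,M_2,M_3,M_4\rangle$. A direct computation shows that the diagonal forms $H=\text{diag}(a,c)$ are exactly the $J$-invariant hermitian forms, and that if such an $H$ is preserved by $M_1$ and $M_2$ then, because $J^*HJ=H$, it is automatically preserved by $M_3=JM_1J^{-1}$ and $M_4=JM_2J^{-1}$. Conversely, the hypothesis that the $M_k$ have no common eigenline makes $\Gamma$ irreducible, so by Schur's lemma the space of $\Gamma$-invariant hermitian forms is at most one real dimension; comparing determinants in $J^*HJ=\mu H$ forces $\mu=1$, so any invariant form is $J$-invariant, i.e.\ diagonal. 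Hence the problem collapses to: \emph{produce a positive definite diagonal $H=\text{diag}(a,c)$ preserved by $M_1$ and $M_2$.}

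I would then make the diagonal form explicit. Writing $M_k=\bigl(\begin{smallmatrix}p_k&q_k\\ r_k&s_k\end{smallmatrix}\bigr)$ and $t:=a/c>0$, the equation $M_k^*HM_k=H$ becomes the three relations $|r_k|^2=t(1-|p_k|^2)$, $t|q_k|^2=1-|s_k|^2$ and $\bar r_k s_k=-t\,\bar p_k q_k$; equivalently $\text{diag}(\sqrt t,1)\,M_k\,\text{diag}(\sqrt t,1)^{-1}\in\SU(2)$. The first relation determines the candidate ratio $t=|r_k|^2/(1-|p_k|^2)$ from each $M_k$ separately, and the task is to show that the value coming from $M_1$ agrees with that coming from $M_2$ and that the common value is positive. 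I would convert these entrywise conditions into the conjugation-invariant quantities $t_{12},t_{13},t_{24},t_{14}$ (recall $t_{34}=t_{12}$ and $t_{23}=t_{14}$ by \eqref{J_sym}), using $\Tr M_k=c$ and $\det M_k=1$; the elliptic condition $c\in(-2,2)$ guarantees that the one-parameter family of diagonal forms fixed by each $M_k$ is nonempty. The content is that the four inequalities $t_{12},t_{13},t_{24},t_{14}\in[-1,1]$ are precisely the conditions under which these constraints admit a common positive solution $t$, with the boundary values $\pm1$ corresponding exactly to the degenerate cases (a shared eigenline, or the invariant form becoming indefinite, i.e.\ a conjugation into $\SU(1,1)$ rather than $\SU(2)$).

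The main obstacle is this last step: showing that real half-traces in the closed window $[-1,1]$ are not merely necessary but sufficient, with no hidden further condition, and in particular that the resulting form is \emph{definite} rather than indefinite. I would control the sign by the two-generator trace criterion --- an irreducible elliptic pair $A,B$ is conjugate into $\SU(2)$ exactly when $x^2+y^2+z^2-xyz-4\le0$ for $x=\Tr A$, $y=\Tr B$, $z=\Tr(AB)$, the complementary sign giving $\SU(1,1)$ --- applied to $M_1,M_2$ and checked for compatibility with the $J$-invariance of the form; here the window $t_{kl}\in[-1,1]$ is what places the character on the $\SU(2)$ side. Finally I would dispose of the reducible sub-case in which $M_1,M_2$ (say) share an eigenline: then they are simultaneously diagonalizable, and the no-common-eigenline hypothesis applied to all four together, combined with \eqref{J_sym}, forces the diagonal form to exist and be positive precisely under the same inequalities, so this case introduces no new condition.
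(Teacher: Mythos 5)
The paper itself gives no proof of this proposition (it is stated as a bare fact, with the relevant machinery deferred to the abelianization paper cited as \cite{He4}), so your argument has to stand on its own --- and its decisive step does not. The forward implication, the Schur-lemma reduction showing that any invariant hermitian form for the irreducible quadruple must be $J$-invariant and hence diagonal, and the observation that $t_{13},t_{24}\in[-1,1]$ force $s_k=\bar p_k$ (via $\Tr(M_1M_3)=(p_1-s_1)^2+2$) are all correct. The gap is your final claim that ``the window $t_{kl}\in[-1,1]$ is what places the character on the $\SU(2)$ side'' of the two-generator trace criterion. It is not: for the pair $(M_1,M_2)$ one has $x=y=c$ and $z=2t_{12}$, so
\[
x^2+y^2+z^2-xyz-4\;=\;2\,(1-t_{12})\,\bigl(c^2-2-2t_{12}\bigr),
\]
which is $\le 0$ precisely when $t_{12}\ge\tfrac{c^2-2}{2}$. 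For $c\neq 0$ this threshold is strictly larger than $-1$, so the hypothesis $t_{12}\in[-1,1]$ leaves an entire interval $\bigl[-1,\tfrac{c^2-2}{2}\bigr)$ in which an irreducible pair with these traces is conjugate into $\SU(1,1)$ rather than $\SU(2)$.

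This is not a repairable bookkeeping slip, because the statement is actually false under only the hypotheses you (and the proposition as literally worded) use, so no argument from those hypotheses alone can close the gap. Take $c\neq 0$, a real $q$ with $0<q^2\le c^2/4$, set $p=\tfrac c2+i\sqrt{1+q^2-c^2/4}$, and let $M_1=M_2=\left(\begin{smallmatrix}p&q\\ q&\bar p\end{smallmatrix}\right)$, $M_3=M_4=JM_1J^{-1}$. Then $\det M_k=|p|^2-q^2=1$, $\Tr M_k=c$, the four matrices preserve $\mathrm{diag}(1,-1)$, there is no common eigenline (a common eigenvector would have to be an eigenvector $(1,\pm1)$ of $M_1-M_3=\left(\begin{smallmatrix}0&2q\\ 2q&0\end{smallmatrix}\right)$, and these are not fixed by $M_1$ since $\Im p\neq0$), and every half-trace lies in $[-1,1]$: $t_{12}=\tfrac{c^2-2}{2}$ and $t_{13}=t_{14}=1-2(\Im p)^2\ge-1$ exactly because $q^2\le c^2/4$. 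Yet by your own Schur argument every invariant hermitian form is a real multiple of $\mathrm{diag}(1,-1)$, hence indefinite, so the quadruple is not unitarizable; perturbing $M_2$ inside $\SU(1,1)$ keeps all inequalities strict and makes the matrices distinct. What must rescue the proposition in the paper's context is the extra relation satisfied by monodromies of the four-punctured sphere, $M_4M_3M_2M_1=\Id$, equivalently $J(M_2M_1)J^{-1}=(M_2M_1)^{-1}$, which the example above violates. Any correct proof has to feed that relation (or equivalent information about the Fuchsian system) into the trace analysis; your outline never invokes it, so the $\SU(1,1)$ alternative is never excluded.
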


\begin{Rem}
We are seeking $\lambda$ families of flat connections which are
unitarizable along the unit circle.  In view of
Remark~\ref{generalized_reconstruction} one can work with the
generalized condition that $t_{kl}(\lambda)\in[-1,1]$ for all
$\lambda\in \bbS^1$ and for all $k,l\in\{1,..,4\}$ if for generic
$\lambda\in \bbS^1$ we have $t_{kl}(\lambda)\in(-1,1)$ for all $k\neq l$.
Then the four matrices are unitarizable by a
diagonal $\lambda$-dependent matrix $D(\lambda)$. In general $D$ has
(finitely many) singularities along the unit circle. By
proposition~\ref{sym_point_condition} and its proof this occurs at
the Sym points.
\end{Rem}

\section{Numerical construction of higher genus CMC surfaces}
\label{sec:experiment}

\subsection{Computing surfaces via DPW}
The surfaces were computed with XLab, a computer framework for surface
theory, experimentation and visualization written in C++ by the second
author.  XLab implements the DPW construction~\cite{DPW} of CMC
surfaces in $\mathbb{S}^3$ from a potential in the following steps.
The holomorphic frame is computed as the numerical solution to an ordinary
differential equation.  Loops appearing in the DPW construction are
infinite dimensional; for computation, they are represented finitely
as Laurent polynomials about $\lambda=0$ by truncation.
Next, the unitary frame
is computed from the holomorphic frame via loop group Iwasawa
factorization.  This calculation applies linear methods to matrices of
coefficients of the Laurent polynomials representing the holomorphic
frame~\cite{KMcIS}.  Next, the CMC immersion is computed by
evaluating the unitary frame at the Sym points.  Finally, the surface
itself is built up by applying its symmetry group to one fundamental
piece computed by the DPW construction.

All Lawson symmetric CMC surfaces were constructed using the following DPW
potential, which is a generalization of the potential~\eqref{DPW_potential}
to arbitrary genus $g$:
\begin{equation}
\label{DPW_g_potential}
\eta = \eta_{A,B} = \dvector{-\frac{g}{g+1}
\frac{z(2z^2-z_0^2-z_1^2)}{(z^2-z_0^2)(z^2-z_1^2)} + \frac{A}{z}  
& \lambda^{-1}
-\frac{\bigl(A+\frac{2}{g+1}\bigr)\bigl(A+\frac{1-g}{1+g}\bigr)}{B}z^2 \\
 \frac{B}{(z^2-z_0^2)(z^2-z_1^2)}
 - \frac{\lambda A(A+1) z_0^2 z_1^2}{z^2(z^2-z_0^2)(z^2-z_1^2)}
 &\frac{g}{g+1}\frac{z(2z^2-z_0^2-z_1^2)}{(z^2-z_0^2)(z^2-z_1^2)}
-\frac{A}{z}}dz
.
 \end{equation}

The Riemann surface structure of genus $g$ is given by
$y^{g+1}=\frac{z^2-z_0^2}{z^2-z_1^2}$, while $A,\,B$ are the accessory
parameters.  The extrinsic closing condition at the Sym points is
similar to that of the case $g=2$ described in proposition~\ref{sym_point_condition},
with the function $R$ replaced by
$\tilde R=A(A+\frac{1-g}{1+g})$.  
%
For rectangular Lawson symmetric CMC surfaces, the reflection symmetry
is imposed by stipulating
$\overline{\eta(\lambda,\,z)} = \eta(\overline{\lambda},\,\overline{z})$,
or equivalently that $A$ and $B$ are real for real $\lambda$,
and the Sym points satisfy $\lambda_2=\overline{\lambda_1}$.

\subsection{Computing the accessory parameters}

The computationally intensive step in the numerical construction of
Lawson symmetric CMC surfaces is the search for the accessory
parameters $A$ and $B$ in the DPW potential which satisfy the
intrinsic and extrinsic closing conditions, thereby making
unitarizable monodromy and trivial monodromy at the Sym points,
thereby closing the surface periods.  The infinite dimensional space
of accessory parameters is approximated by power series truncation.
The accessory parameters are computed by optimization (minimization)
algorithms.  The function to be minimized (objective function) is
constructed to measure how far a set of generators of the monodromy
group of the DPW potential is from being simultaneously
unitarizable. This measure is computed as the average over a set of
equally spaced sample points on the unit circle in the spectral plane.
To speed up these lengthy calculations, the objective function is
computed in parallel over the sample points simultaneously on a
multicore computer.  Once the accessory parameters in the DPW
potential are found, the diagonal unitarizer, computed as
in~\cite{KS13}, is used as the initial value for the holomorphic frame
so that the monodromy is unitary.

The monodromies were computed with an adaptive Runge-Kutta-Fehlberg 4(5) ODE solver with
absolute and relative error tolerances set to $10^{-12}$,
and the minimization was performed with the gradient-free
optimization algorithms PRAXIS, BOBYQA and Nelder-Mead,
chosen heuristically.

To compute the holomorphic accessory parameters $A$ and $B$,
expand at $\lambda=0$
\[
A=\sum_{k=0}^\infty a_k\lambda^k
\quad\text{and}\quad
B=\sum_{k=0}^\infty a_k\lambda^k
\]
and for a finite $N$,
approximate by truncation
\[
A^N=\sum_{k=0}^N a_k\lambda^k
\quad\text{and}\quad
B^N=\sum_{k=0}^N a_k\lambda^k.
\]

Based on proposition~\ref{four_punctured_monodromy_test},
we define the \emph{objective function} $\mathcal{F}$
as a non-negative function measuring how close the monodromy
is from being unitarizable
and hence how close the surface is to being closed,
with $\mathcal{F}=0$ representing exact unitarizability.
More precisely, $\mathcal{F}$ is the root mean square of the imaginary parts of the monodromy
half-traces, penalized when their real parts fail to be in $[-1,\,1]$.
To define $\mathcal{F}$, consider
$\mathcal{F}_1\colon \mathbb{S}^1\times \C^N\times \C^N\to \R$ measuring
unitarizablity at one point $\mu\in\bbS^1$
\[
\mathcal{F}_1(\mu, a_0,..,a_N,b_0,..,b_N)=
\tfrac{1}{4}\left(\chi(t_{12}) + \chi(t_{13}) + \chi(t_{23}) + \chi(t_{24})\right),
\]
where $t_{ij}=\frac{1}{2}\Tr(M_i M_j)$
($i,\,j\in\{1,\dots,4\}$)
for the monodromy matrices $M_i$
of the connection
$\nabla=d+\eta_{A(\mu),B(\mu)}$ on the four-punctured sphere
$\CP^1\setminus\{\pm z_0,\pm z_1\}$ and
$\chi\colon\C\to \R$ given by
\[
\chi(t) = {(\Im t)}^2 + {(\max(0,\,|\Re t|-1))}^2
\]
is the squared distance from $t$ to the line segment
$[-1,\,1]$ in the complex plane.
Then
$\mathcal{F}\colon\C^N\times \C^N\to \R$ is defined by
\[\mathcal{F}(a_0,\dots,a_N,b_0,\dots,b_N)=
\sqrt{\textstyle
\frac{1}{K}\sum_{k=1}^K \mathcal F_1(\mu_k,a_0,\dots,a_N,\,b_0,\dots,b_N)}
\]
where $\mathcal{F}_1$ is evaluated
at a finite number of equally spaced sample points
$\mu_1,\dots,\mu_K\in\mathbb{S}^1$.
In the case of rectangular Lawson symmetric surfaces,
by the antiholmorphic symmetry it suffices to take 
$\mu_1,\dots,\mu_K$ along the upper half of $\bbS^1$.

The extrinsic closing conditions~\eqref{sym_point_condition_1} in
proposition~\ref{sym_point_condition} for Sym points
$\lambda_1,\lambda_2\in \mathbb{S}^1$, is enforced in the numerical
search by replacing $B$ by $C$ via
\begin{equation}
\label{B}
B=f R+h C,
\end{equation}
where $R$ is defined in proposition~\ref{sym_point_condition},
$f$ is the unique polynomial of degree $\leq3$ satisfying
\[
f(\lambda_1)=z_0^2\lambda_1,\quad
f(\lambda_2)=z_1^2\lambda_2,\quad
f'(\lambda_1)=z_0^2,\quad
f'(\lambda_2)=z_1^2
\]
and
\[
h(\lambda)=(\lambda-\lambda_1^2)(\lambda-\lambda_2^2).
\]
The coefficients of $A$ and $C$ truncated to 
\[
C^N=\sum_{k=0}^{N}c_k\lambda^k
\]
are the free variables in the numerical search.

\subsection{The Lawson surfaces $\xi_{g,1}$}
\label{Lawson2}

An initial test of our experimental setup was to compute the Lawson
surfaces $\xi_{g,1}$ for $g=1,\dots,6$
(figure~\ref{fig:lawson}).  The case genus $g=1$ recovers the
Clifford torus.  For genus $g=2$, the numerically computed surface
has the symmetries $\varphi_2,$
$\varphi_3$ and $\tau$. Moreover, it has an additional space
orientation reversing and an anti-holomorphic symmetry. The first
symmetry is induced by the property that $A$ and $B$ are even
functions of $\lambda$ while the later symmetry follows from the
reality of $A$ and $B.$ It then can be deduced that the minimal
surface in $\bbS^3$ must be the Lawson surface $\xi_{2,1}$.  Note that an
energy formula (analogous to the energy formula in~\cite{He3}) applied
to our numerical DPW potential yields an area of $21.91,$ a value
which only slightly differs from what has been numerically computed
in~\cite{HKS} using the Willmore flow.

\subsection{Family I: CMC deformations of the Lawson surfaces $\xi_{g,1}$}
\label{sec:family1-experiment}

\begin{figure}
\centering
\includegraphics[width=0.45\textwidth]{lawson2.pdf}
\includegraphics[width=0.45\textwidth]{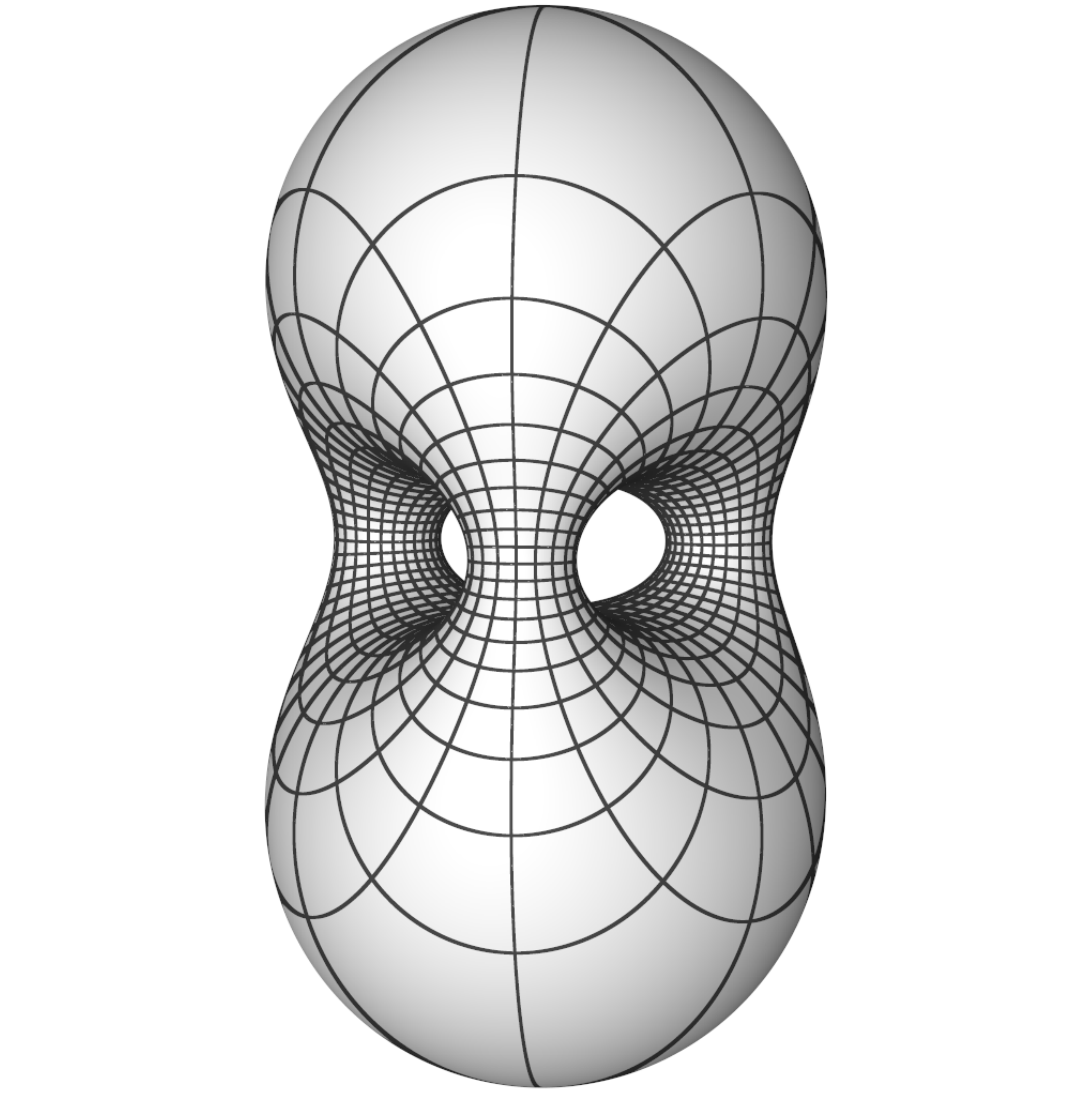}
\includegraphics[width=0.45\textwidth]{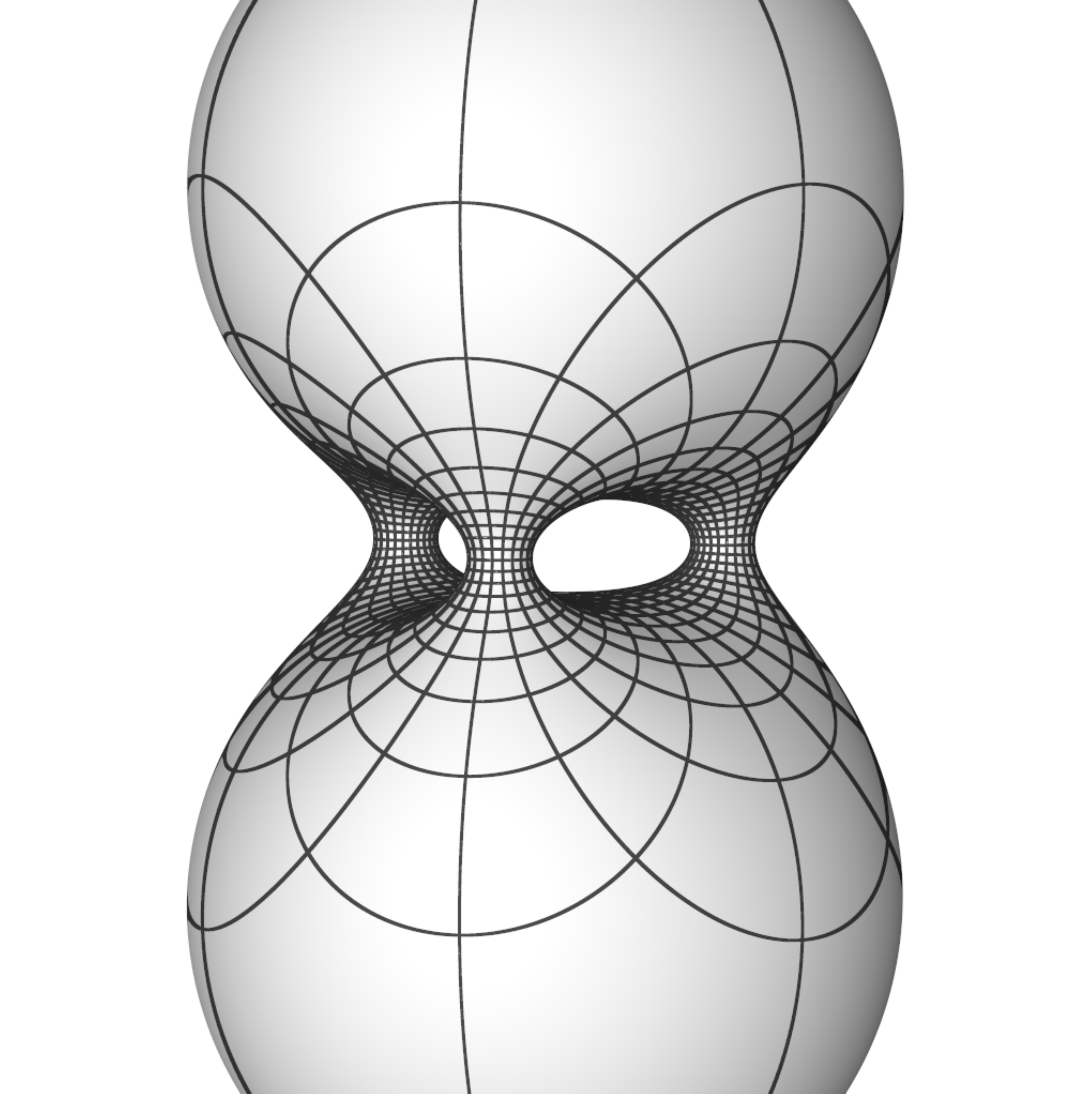}
\includegraphics[width=0.45\textwidth]{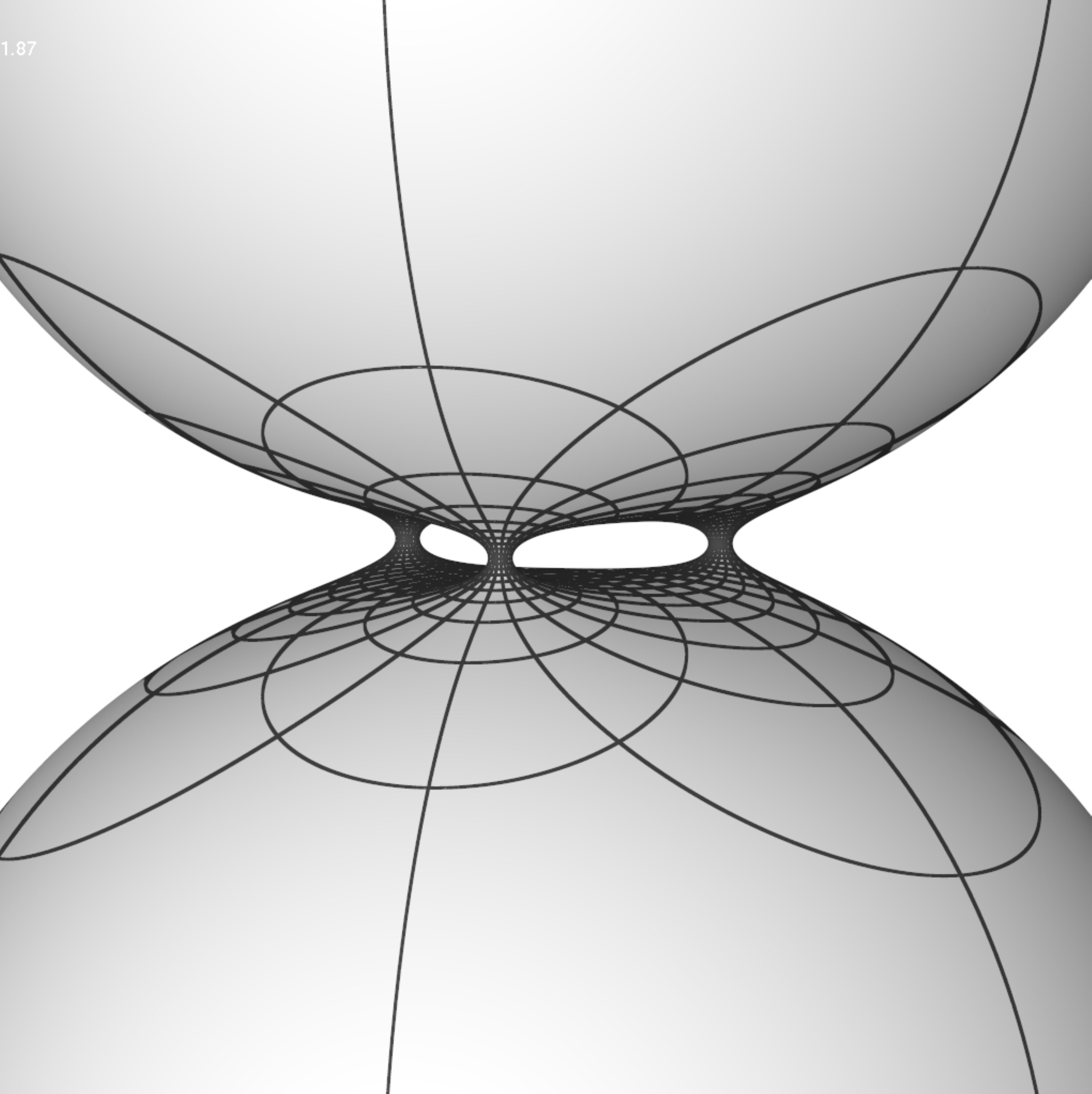}

\caption{
\footnotesize
Family I of CMC surfaces of genus 2 in $\bbS^3$, starting at the Lawson surface
(upper left). As the necks shrink as the lobes expand,
the surfaces converge to a double covered minimal sphere as singular limit.
}
\label{fig:lawsoncmc1}
\end{figure}

In designing deformations from the Lawson surfaces, physical intuition
suggests that the pressure differential inside and outside the CMC
surface breaks the ambient space orientation-reversing symmetries.  In
terms of the DPW potential, the accessory parameters $A(\lambda)$ and
$B(\lambda)$ are no longer even functions of $\lambda$, as can be
deduced from the Sym point condition \eqref{sym_point_condition_1} in
proposition~\ref{sym_point_condition}.

In the case of genus $g=1$ we reconstructed via the DPW
potential~\eqref{DPW_g_potential} the $1$-parameter family of
homogeneous tori of spectral genus $0$, starting at the Clifford
torus, and its bifurcations into CMC tori of spectral genus
$1$~\cite{KSS}.  Along this family are bifurcations to the Delaunay
tori of spectral genus $1$.  The bifurcation appears in our setup as
follows.  The zero $\lambda_0>1$ of $B$ which is closest to the origin
is of order $2$ for the Clifford torus. As it crosses the unit circle
it can continue either as a double zero to the inside or bifurcate to
two simple zeros reflected across the unit circle. When it continues
as a double zero the CMC tori remain homogeneous whereas in the second
case one obtains unduloidal rotational Delaunay tori of spectral genus
$1$.

In the case of genus $g=2$, we numerically searched for and found
minimizers of $\mathcal F$ starting with the initial data of the
Lawson surface on a slightly deformed rectangular Lawson symmetric
Riemann surface.
Repeating, we obtained a family of Lawson symmetric CMC surfaces
through the Lawson surface $\xi_{2,1}$.  This family of Lawson symmetric CMC
surfaces is shown in figure~\ref{fig:lawsoncmc1}.

We further computed the deformation family from $\xi_{3,1}$ and expect that such a 1-parameter deformation of Lawson
symmetric CMC surfaces exists for each genus realizing the expected 1-dimensional CMC deformation family 
of $\xi_{g,1}$~\cite{KMP}.

\subsection{Family II: a flow through CMC surfaces of genus $2$}
\label{sec:family2-experiment}

\begin{figure}
\centering
\includegraphics[width=0.475\textwidth]{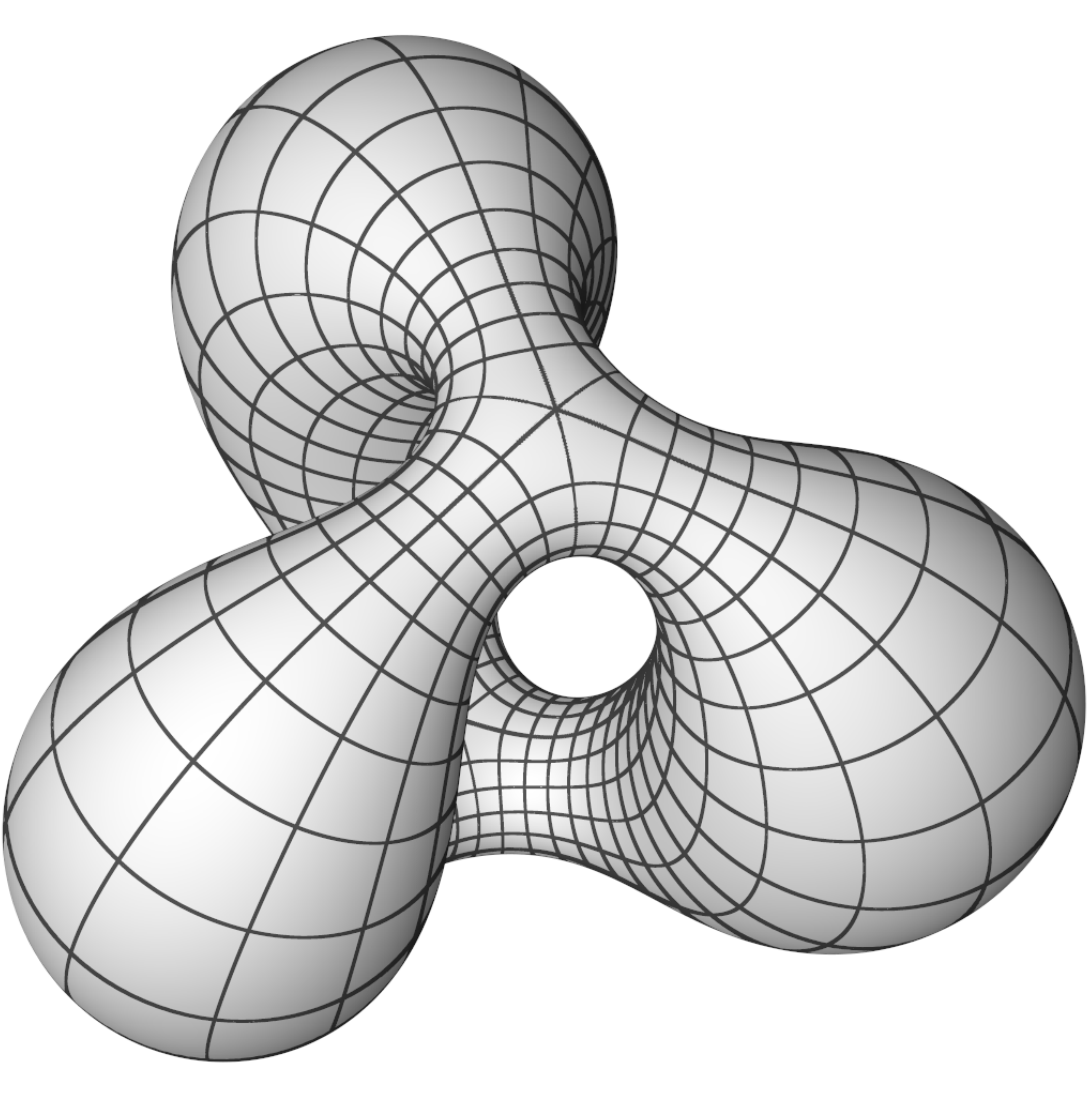}
\includegraphics[width=0.475\textwidth]{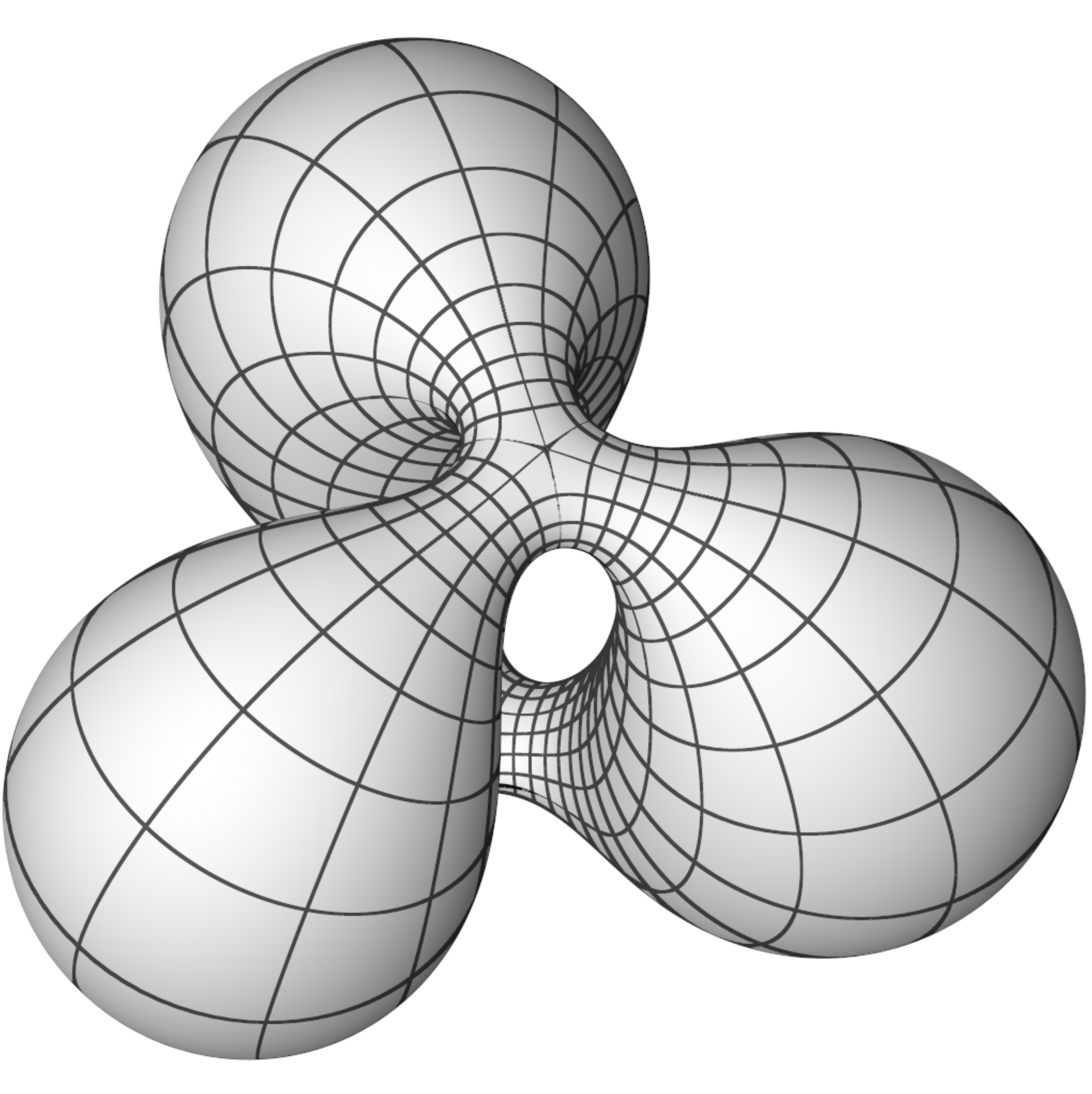}
\includegraphics[width=0.475\textwidth]{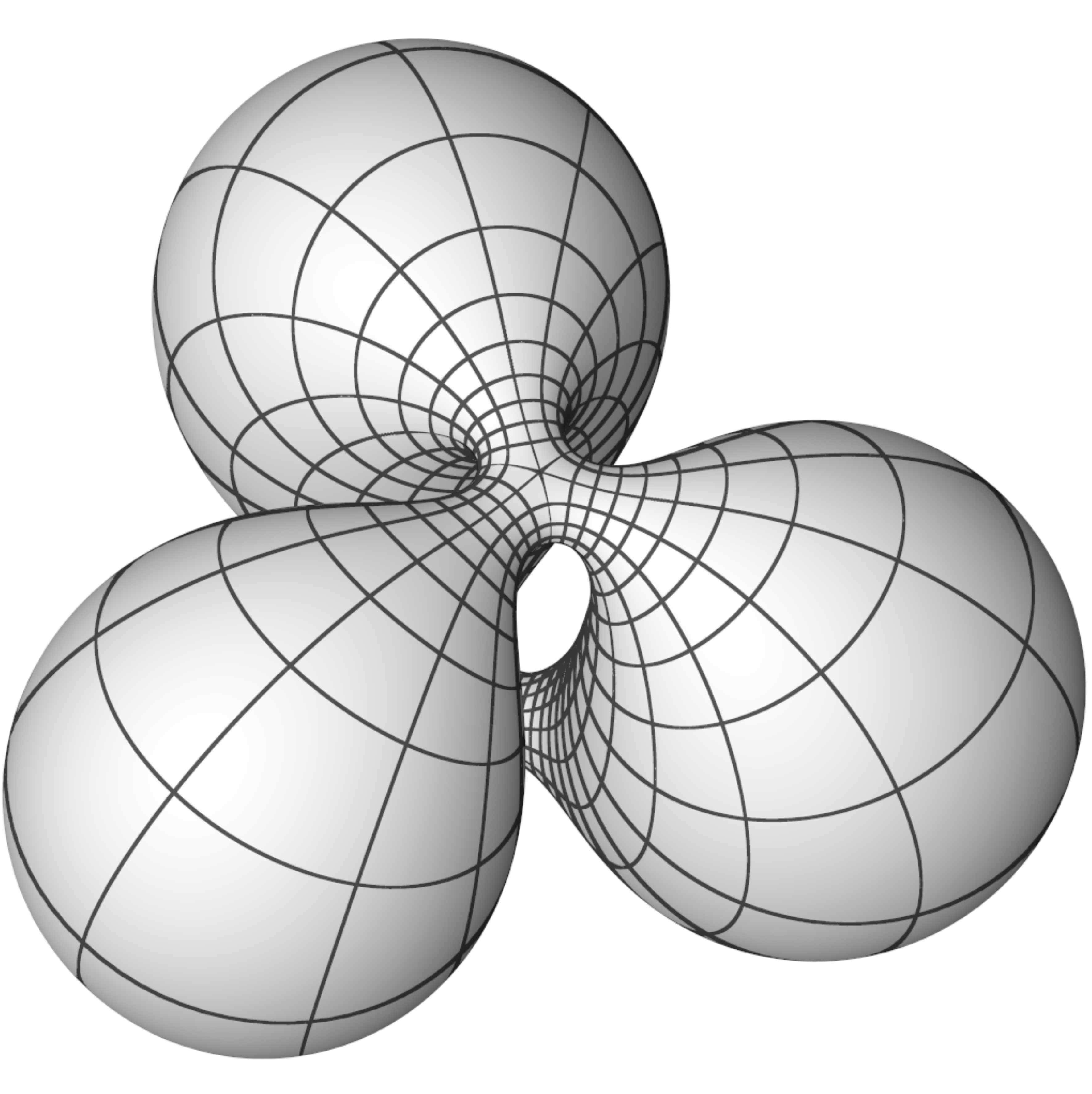}
\includegraphics[width=0.475\textwidth]{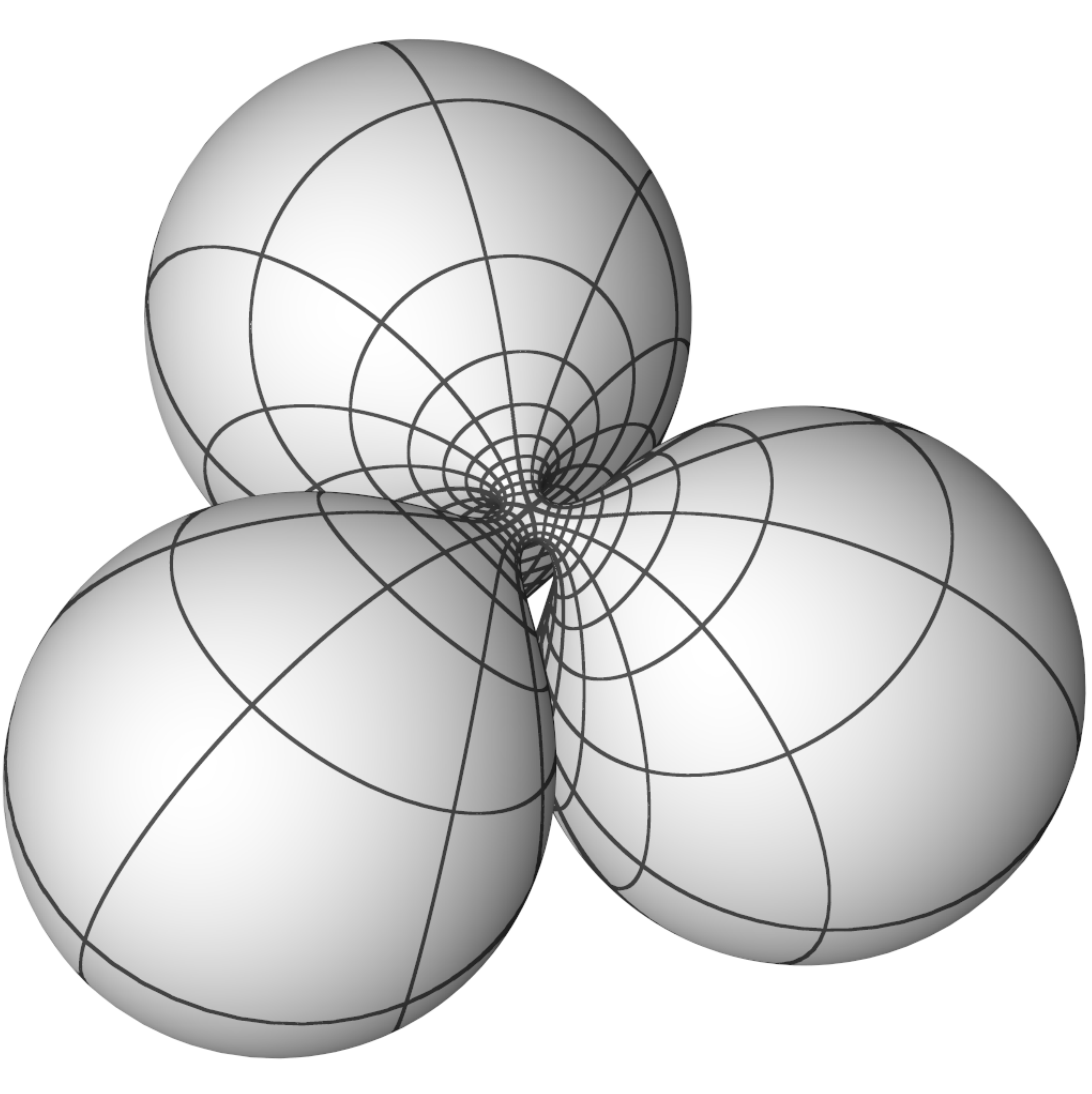}
\caption{
\footnotesize
Family II of CMC surfaces in $\bbS^3$ of genus $2$.
This stereographic projection exposes their $\Z_3$ symmetry.
Unlike family I, this family is not connected to the Lawson minimal
surface $\xi_{2,1}$, 
The flow is conjectured to limit to a necklace of three CMC spheres
(lower right) as the conformal type degenerates.
}
\label{fig:lawsoncmc2a}
\end{figure}

We found an additional $1$-parameter family of
rectangular Lawson symmetric CMC surfaces by
modifying the objective function $\mathcal{F}$ to enforce the
condition that $B$ and $A+1$ have a common zero $\lambda_0$ inside the
unit disk. A detailed discussion of this condition can be found in
section~\ref{prop:unstable}.

Figure~\ref{fig:lawsoncmc2a} shows a sequence in this family which
converges to a singular limit which is a chain of three round CMC
spheres in $\bbS^3$.  Figure~\ref{fig:lawsoncmc2b} depicts a
different stereographic projection of same family, in which the
symmetries $\varphi_2$ and $\tau$ appear as symmetries of Euclidean
$3$-space.  In this view the surfaces appear as 2-lobed Delaunay tori
into which a piece of a Delaunay cylinder is glued.

Figure~\ref{fig:graph} plots the
the Riemann surfaces structures and the mean curvature of the numerically
computed families I and II of Lawson symmetric CMC surfaces of genus $2$.

\begin{figure}
\centering
\includegraphics[width=0.475\textwidth]{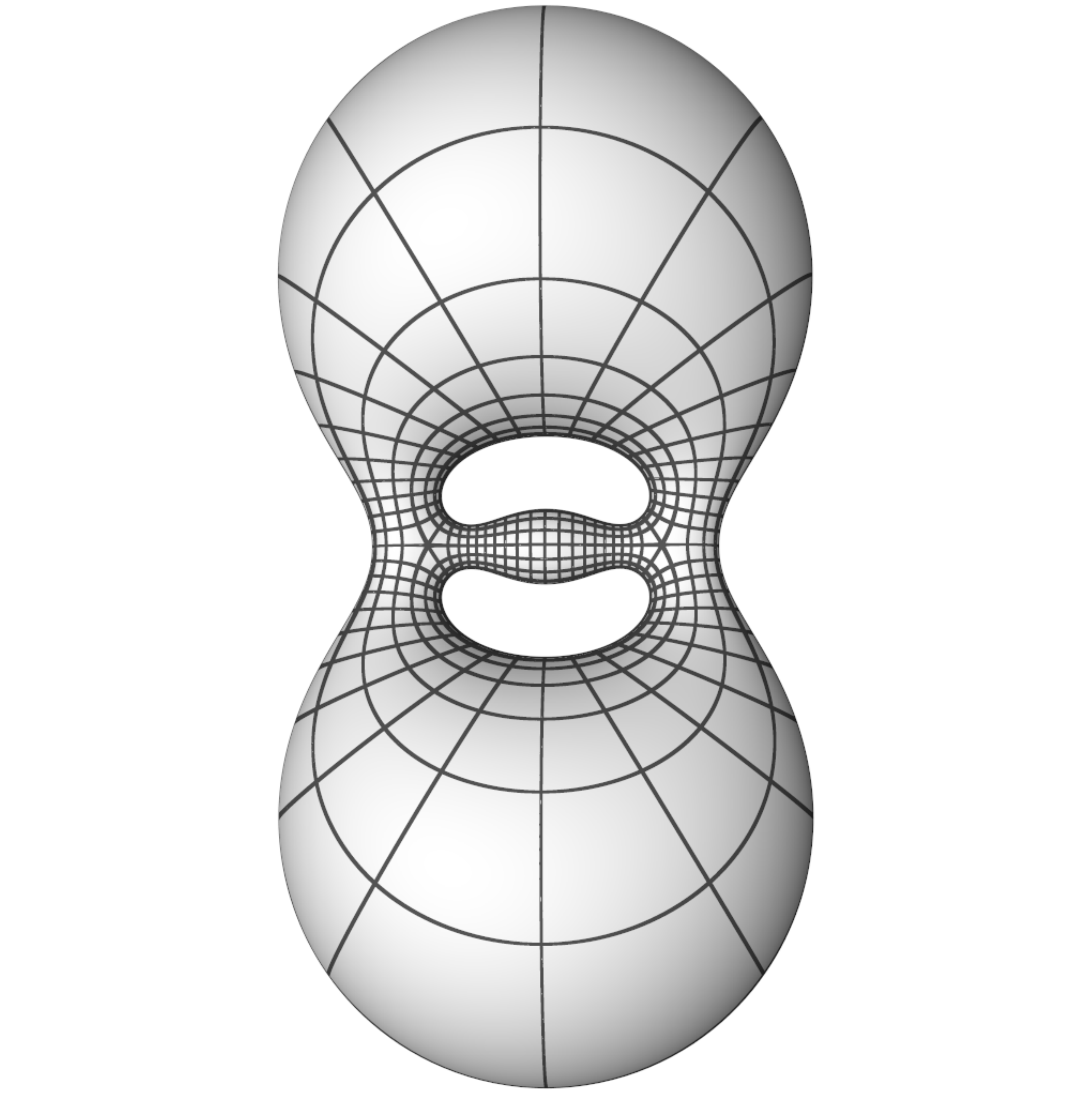}
\includegraphics[width=0.475\textwidth]{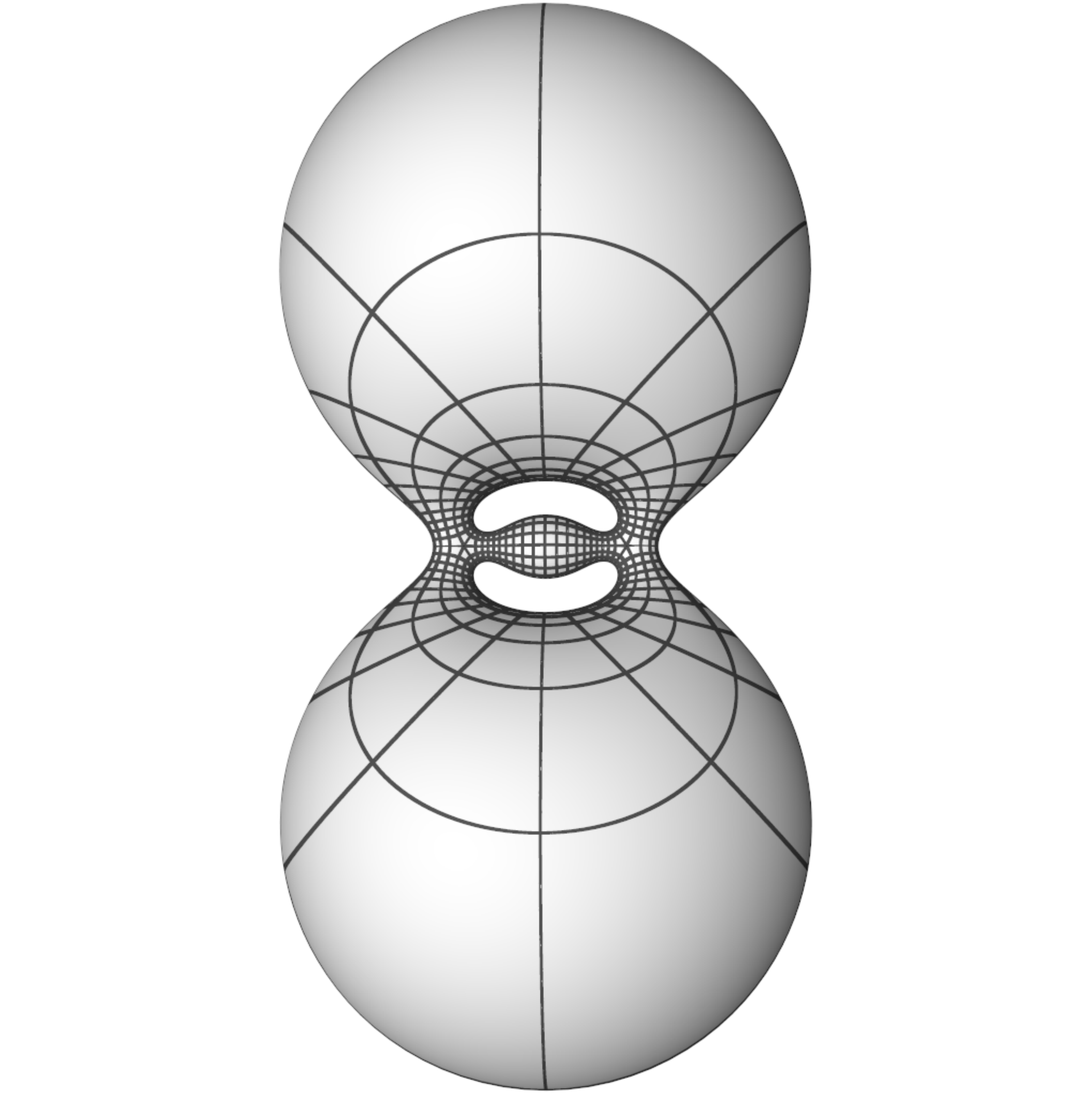}
\includegraphics[width=0.475\textwidth]{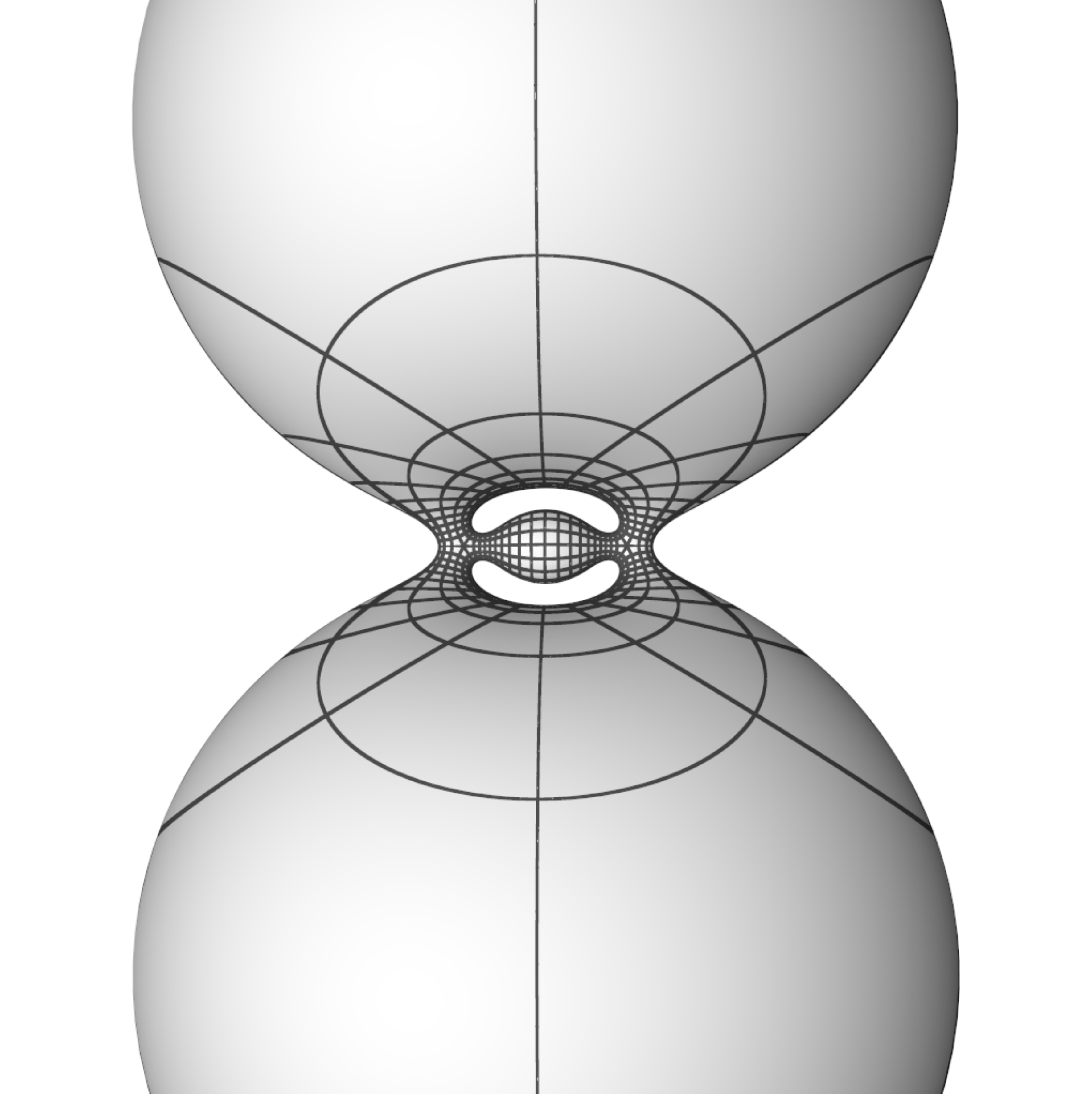}
\includegraphics[width=0.475\textwidth]{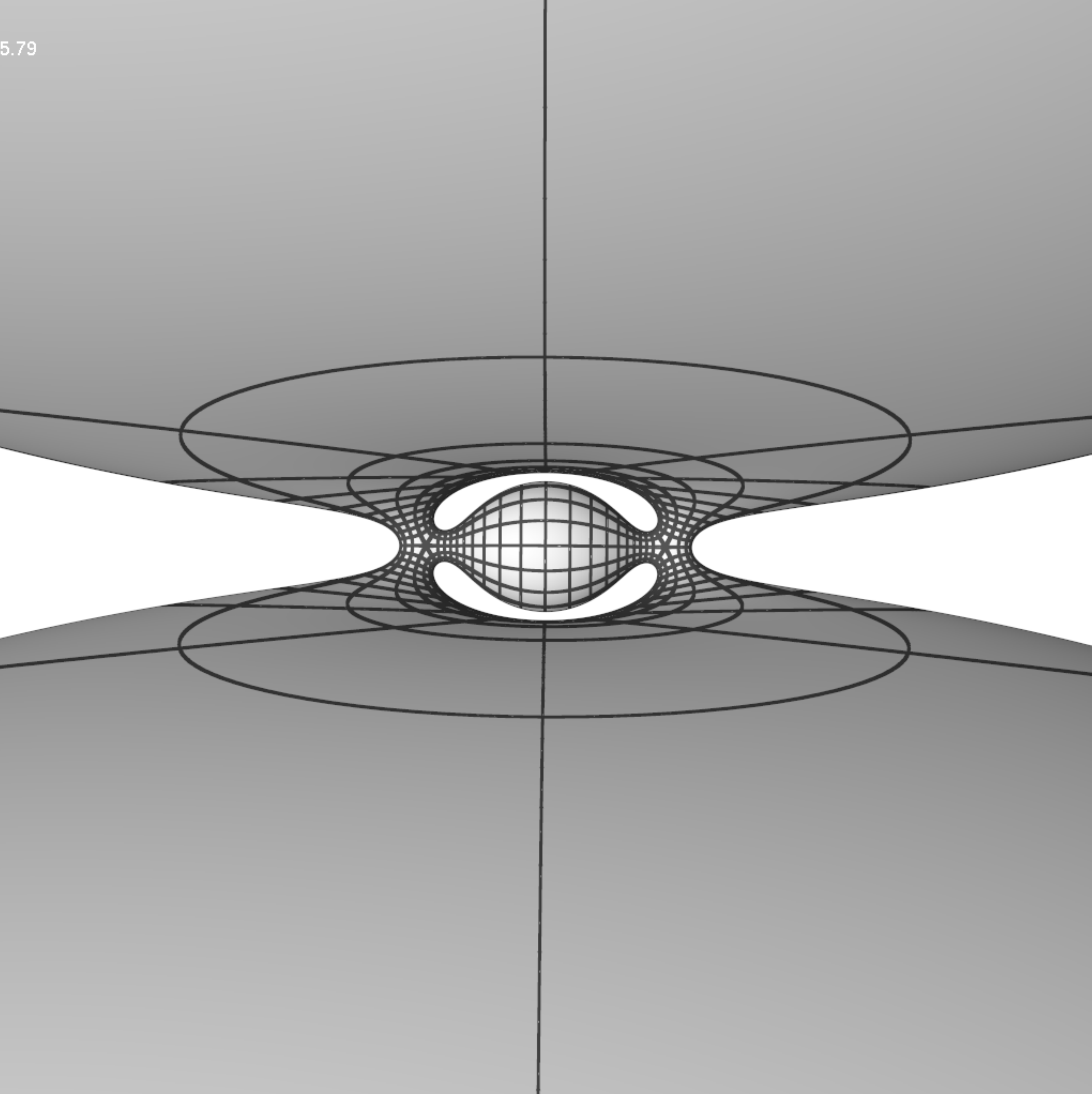}
\caption{
\footnotesize
A different stereographic projection of genus $2$ CMC surfaces of family II.
In this view, the flow can be seen to mimic that of the $2$-lobe
Delaunay tori, but with a piece of a Delaunay cylinder glued in.
}
\label{fig:lawsoncmc2b}
\end{figure}
\begin{figure}
\centering
\includegraphics[width=0.495\textwidth]{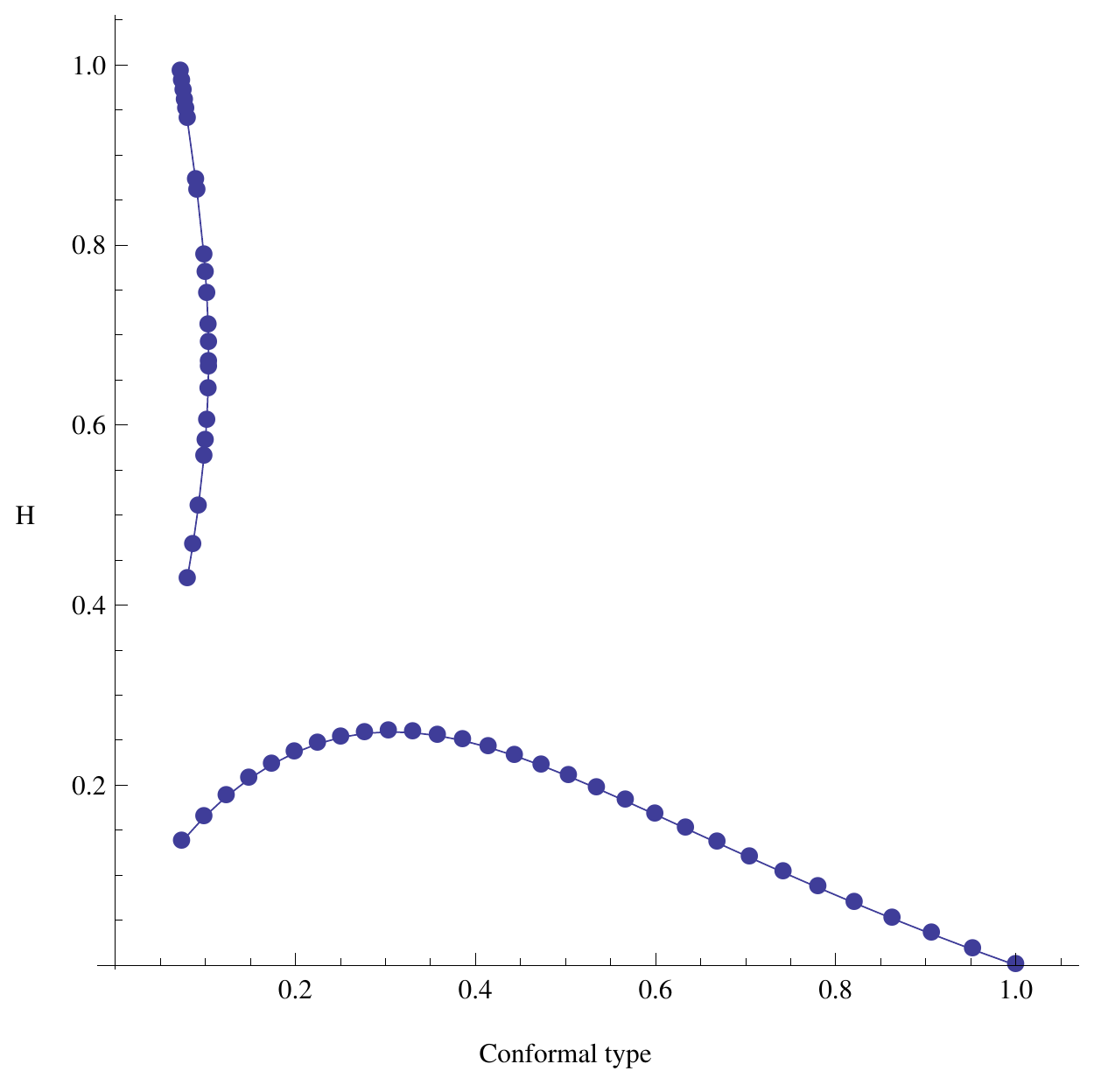}
\caption{
The two families I and II of numerically computed CMC surfaces in $\bbS^3$ of genus $2$,
plotting conformal type against mean curvature.
The horizontal curve represents Family I, starting at the Lawson surface (lower right)
and limiting to a doubly covered minimal $2$-sphere at the origin (lower left).
The vertical curve is Family II, conjectured to limit to a necklace of three spheres
as the mean curvature decreases.
}
\label{fig:graph}
\end{figure}

\section{Outlook}\label{outlook}
In our experiments we have constructed two families of compact
genus $2$ CMC surfaces in the $3$-sphere.
While the experiments not only give evidence for surfaces families,
but hint toward a notion of a generalized spectral curve for compact
CMC surfaces.
We propose a notion of spectral genus in terms
of the parabolic structures.

\subsection{Parabolic structures}
A parabolic structure, which prescribes boundary data to a flat connection
on a punctured Riemann surface, is an analog of
the induced holomorphic structure of a flat connection
on a compact surface;
in fact, there is a strong relationship between the two notions~\cite{Bis4}.
For our purposes it is enough to consider
parabolic structures for Fuchsian systems, that is,
meromorphic connections on a punctured $\mathbb{P}^1$ with first
order poles.  For details see~\cite{MS, Bis4}.

The general form of a $\mathfrak{sl(2,\C)}$ Fuchsian system is
\begin{equation}
\nabla=d+\sum_{i=1}^n A_n \frac{dz}{z-z_n}
\end{equation}
with residues $A_1,\dots,A_n\in\mathfrak{sl(2,\C)}$, and pairwise distinct
singularities $z_1,\dots,z_n\in\mathbb{C}$
of $\nabla$.
We assume $z=\infty$ is not a singularity, so $\sum A_i=0$,
and specialize to the case that each $A_i$ has an eigenvalue
$\rho_i\in]0,\frac{1}{2}[$, with corresponding eigenline $E_i$.
The parabolic structure of $\nabla$ is then given by the
filtrations
\[
0\subset E_i\subset \C^2
\]
over the singularities together with the corresponding weight
filtrations $(\rho_i,-\rho_i)$: the line $E_i$ is equipped with
the weight $\rho_i$, while $\C^2/ E_i$
is equipped with the
weight $-\rho_i$.
The parabolic degree of a holomorphic line subbundle $L\subset \underline \C^2$
is
\[\pdeg L=\deg L+\sum_i\gamma_i,\]
where $\gamma_i$ is defined to be $\rho_i$ if
$L_{p_i}=E_i$ and $-\rho_i$ otherwise ($i=1,\dots,n$).

If $\nabla$ has a unitarizable
monodromy representation, then the parabolic degree for every
holomorphic line subbundle $L\subset \underline \C^2$ is non-positive,
and is negative if $\nabla$ is irreducible. Parabolic structures
satisfying this property are called semi-stable respectively
stable. 
Conversely, by Mehta and Seshadri~\cite{MS} every stable parabolic structure 
is determined by a unitary representation of the fundamental group of the punctured Riemann surface.
Hence, every stable parabolic structure as defined above admits a
Fuchsian system with unitarizable monodromy representation, see also \cite{Biq} for a more differential-geometric approach.

\subsection{Unstable parabolic structures of Lawson symmetric CMC surfaces}
Consider the DPW potential $\eta$~\eqref{DPW_potential} for
a genus $2$ Lawson symmetric CMC surface.
As in~\cite{He2}, $\eta$ can be gauged to a $\lambda$-family of Fuchsian systems.
The induced parabolic structures at $\lambda\in\Cstar$ extend holomorphically
to $\lambda=0$.
The parabolic structure of a Lawson symmetric CMC surface is stable for generic $\lambda$,
since the parabolic structures
for $\lambda$ on the unit circle are generically stable,
and stability is an open condition.
The following proposition characterizes the unstable parabolic structures:
\begin{Pro}
\label{prop:unstable}
The parabolic structure induced by $d+\eta_{A(\lambda_0),B(\lambda_0)}$
for $\lambda_0\in\Cstar$
is unstable if and only if
$B$ and $A+1$ vanish at $\lambda_0$, with orders satisfying
$ord_{\lambda_0} B\leq ord_{\lambda_0}(A+1)$.
\end{Pro}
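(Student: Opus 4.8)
The plan is to detect instability through a single explicit destabilizing line subbundle and to reduce the statement to the computation of one integer, the degree of that subbundle in the regularized bundle. First I would compute the residues of $\eta_{A,B}$ at the four punctures $\pm z_0,\pm z_1$. A direct calculation shows that at each puncture the residue is lower triangular with diagonal $\operatorname{diag}(-\tfrac13,\tfrac13)$, so that the eigenline $E_i$ carrying the positive parabolic weight $\rho_i=\tfrac13$ is the \emph{constant} line $\langle e_2\rangle$ at all four punctures, independently of the values of $A$ and $B$. Consequently the natural candidate to destabilize is $L=\langle e_2\rangle$, for which $\sum_i\gamma_i=\tfrac43$; any competing subbundle meets fewer of the $E_i$. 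Writing $\widetilde V$ for the holomorphic bundle obtained from $\underline{\C}^2$ after the gauge transformations that remove the apparent singularities at $z=0$ and $z=\infty$, one then has
\[
\pdeg L=\deg_{\widetilde V}L+\tfrac43,
\]
so that the parabolic structure is unstable if and only if $\deg_{\widetilde V}L\ge -1$. The entire problem is thus to compute this degree as a function of $A$ and $B$ near $\lambda_0$.

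Next I would carry out the local analysis at $z=0$, following the lower-triangular normalization used in the proof of Theorem 4.5 of~\cite{He2}; the point $z=\infty$ is handled identically using the symmetry $z\mapsto -z$ (acting by $J=\operatorname{diag}(i,-i)$ as in~\eqref{J_sym}) together with the $\Z_3$-action. The essential feature is that the $(2,1)$-entry of $\eta_{A,B}$ carries a \emph{double} pole at $z=0$ whose coefficient is proportional to $\lambda A(A+1)$, while its $(1,2)$-entry is regular at $z=0$ with value $\lambda^{-1}$. For generic $A$ this double pole is present and the regularizing gauges at $z=0$ and $z=\infty$ together lower the degree of the image of $\langle e_2\rangle$ by two units, giving $\deg_{\widetilde V}L=-2$ and $\pdeg L=-\tfrac23<0$, consistent with generic stability. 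The numerator of the $(2,1)$-entry is $Bz^2-\lambda A(A+1)z_0^2z_1^2$, so its constant term in $z$ is killed when $A(A+1)=0$ and its quadratic term when $B=0$; the disappearance of the double pole at $z=0$, which is what allows the degree drop to shrink, therefore requires $A(A+1)$ to vanish. That it is the root $A=-1$, and not $A=0$, which appears in the statement is then forced by the off-diagonal pattern: at $A=0$ the induced weights leave $L$ with nonpositive parabolic degree.

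Then I would bring in the zero of $B$ and the spectral behavior at $\lambda_0$. When $B(\lambda_0)=0$ the $(1,2)$-entry acquires a pole of order $m:=\ord_{\lambda_0}B$ in $\lambda$, so $\eta_{A(\lambda),B(\lambda)}$ is itself singular at $\lambda_0$; by Remark~\ref{generalized_reconstruction} the induced parabolic structure nevertheless extends holomorphically across $\lambda_0$, the $\lambda$-pole being absorbed into the positive part of the Iwasawa factorization. Setting $\mu=\lambda-\lambda_0$ and $n:=\ord_{\lambda_0}(A+1)$, the lower-left residue at each puncture is a nonzero multiple of $B-\lambda A(A+1)z_\ast^2$ (with $z_\ast\in\{z_0,z_1\}$), hence vanishes to order $\min(m,n)$, whereas the $\lambda$-pole of the $(1,2)$-entry has order $m$. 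I would then track these two orders through the diagonal $\lambda$-gauge $\operatorname{diag}(\mu^{-k},\mu^{k})$ that trivializes the holomorphic extension at $\lambda_0$, combine this with the $z$-local degree bookkeeping of the previous paragraph, and read off that the two competing monomials $Bz^2$ and $\lambda A(A+1)z_0^2z_1^2$ balance so as to raise $\deg_{\widetilde V}L$ to $-1$ precisely when $m\le n$, whereas for $m>n$ it stays $\le -2$ and $L$ does not destabilize. Together with the reduction of the first paragraph this yields the asserted equivalence.

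I expect the main obstacle to be exactly this final degree computation, in which the line $L$ must be followed simultaneously through the apparent singularities on the Riemann surface at $z=0,\infty$ and through the apparent singularity on the spectral plane at $\lambda_0$. One has to verify that the positive Iwasawa factor shifts $\deg_{\widetilde V}L$ by precisely the amount dictated by the comparison of $\ord_{\lambda_0}B$ with $\ord_{\lambda_0}(A+1)$, and that the $z$-gauge and the $\lambda$-gauge can be chosen so as not to interfere. Checking that no subbundle of degree $-1$ with a different twisting profile produces an independent destabilization, so that $L=\langle e_2\rangle$ really is the extremal subbundle, is the remaining delicate point; the residue and pole computations sketched above are otherwise routine.
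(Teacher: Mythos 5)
Your overall strategy for the ``if'' direction --- locate the eigenlines $E_i$ carrying the weight $+\tfrac13$, observe that they all lie on a single line subbundle $L$, and show $\pdeg L>0$ --- is the same idea as the paper's, but the paper executes it by an explicit computation that you replace with a bookkeeping plan whose decisive steps are asserted rather than derived. Under the hypothesis (so that $C=\lim_{\lambda\to\lambda_0}(A+1)/B$ is finite, which is exactly where $\ord_{\lambda_0}B\le\ord_{\lambda_0}(A+1)$ enters), the paper gauges $d+\eta$ to an explicit Fuchsian system depending only on $C$ and reads off that the four eigenlines are $\C\,\dvector{\tfrac43 z\\1}$, i.e.\ the fibers of a degree $-1$ subbundle of $\underline\C^2\to\CP^1$, whence $\pdeg=-1+4\cdot\tfrac13>0$. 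In your version the entire content of the proof is the claim that $\deg_{\widetilde V}L$ equals $-2$ generically and jumps to $-1$ exactly when $\ord_{\lambda_0}B\le\ord_{\lambda_0}(A+1)$ and $A(\lambda_0)=-1$; you state this as the expected outcome of tracking $L=\langle e_2\rangle$ through the regularizing gauges at $z=0,\infty$ and the $\lambda$-gauge at $\lambda_0$, but the tracking is never done. That computation \emph{is} the proof.

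There is moreover a step that, as described, would fail. The mechanism you offer for the degree jump is the disappearance of the double pole of the $(2,1)$-entry at $z=0$, whose coefficient is proportional to $\lambda A(A+1)$. This vanishes equally at $A=0$ and at $A=-1$, so your mechanism predicts instability at $(A,B)=(0,0)$ as well, contradicting the ``only if'' direction of the very statement you are proving; the one-sentence dismissal of the root $A=0$ (``forced by the off-diagonal pattern'') is not an argument, and the discrimination between the two roots is precisely where the work lies (in the paper it emerges from the explicit gauge, which depends on $(A+1)/B$ rather than $A/B$). Separately, reducing instability to the single candidate $L=\langle e_2\rangle$ on the grounds that ``any competing subbundle meets fewer of the $E_i$'' is insufficient: a subbundle of larger degree meeting only three of the $E_i$ would still have $\pdeg>0$, so one must control the full configuration of eigenlines, not just one candidate; you flag this yourself, but it is essential for the converse, not merely delicate. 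For calibration, note that the paper does not prove the converse either --- it is deferred to results of \cite{He4} --- so a self-contained argument there is not expected; but the forward direction needs to be completed by an actual computation such as the paper's explicit gauge.
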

\begin{proof}
If $B(\lambda_0)=0$ and $A(\lambda_0)=-1$ then $d+\eta$ can be gauged
to the Fuchsian system
\[
d+\dvector{
\frac{2}{3}\frac{z(2 C z_0^2z_1^2+z_0^2+z_1^2)}{(z^2-z_0^2)(z^2-z_1^2)} &
 -\frac{16}{9}\frac{z_0^2z_1^2(z^2 C+1)}{(z^2-z_0^2)(z^2-z_1^2)}
\\  \frac{z^2+C z_0^2 z_1^2}{(z^2-z_0^2)(z^2-z_1^2)}&
 -\frac{2}{3}\frac{z(2 C z_0^2z_1^2+z_0^2+z_1^2)}{(z^2-z_0^2)(z^2-z_1^2)}}dz
\]
where $C=\frac{A(\lambda_0)+1}{B(\lambda_0)}\in\C$ by assumption.  The
eigenlines of the residues of this Fuchsian system at the
singularities $z=\pm z_0,\pm z_1$ with respect to the eigenvalue
$\frac{1}{3}$ are $\C\dvector{\tfrac{4}{3} z\\ 1}$. Hence
they coincide with the fibers of a line subbundle of degree $-1$ at
$z=\pm z_0,\pm z_1$, so the induced parabolic structure is unstable.
The converse, the proof of which would lead too far afield,
can be deduced from results in~\cite{He4}.
\end{proof}

\subsection{Generalized spectral genus for Lawson symmetric CMC surfaces of genus $2$}\label{gen_spec_gen}

Proposition~\ref{prop:unstable} provides a means to distinguish the two
experimental Lawson symmetric CMC families.
For family I, at the Lawson surface $\xi_{2,1}$ and surfaces nearby,
$B$ does not vanish in the unit disk.
As the family progresses from the Lawson surface
to the doubly covered minimal sphere as singular limit,
a first order zero $\lambda_0$ of $B$ crosses the
unit circle into the unit disk and approaches $\lambda=0$ (figure~\ref{fig:lawsoncmc1}).
Since for this family $A(\lambda_0)=-\tfrac{2}{3}\neq-1$,
the corresponding parabolic
structures at $\lambda_0$ are stable by proposition~\ref{prop:unstable}.

In fact it can be shown that for each member of family I,
for spectral parameter $\lambda$ inside the unit disk
the corresponding parabolic structures is stable or strictly semi-stable.
The proof of this relies on the fact that within families of Lawson symmetric flat
connections, unstable parabolic structures cannot disappear,
but a detailed proof of this fact would need a careful treatment of the
moduli spaces of flat connections and parabolic structures~\cite{He3, He4}.
On the other hand, for all members of family II, as noted in section~\ref{sec:experiment},
$B$ and $A+1$ have a single common zero $\lambda_0$ of first order inside the
unit disk, so the corresponding parabolic structure at $\lambda_0$ is unstable
by proposition~\ref{prop:unstable}.
As unstable parabolic
structures do not admit unitarizable flat connections, any $\lambda_0$
at which the corresponding parabolic structure is unstable cannot cross the unit circle, 
where the connections are unitarizable.
Hence the number of points $\lambda_0$ in the unit disk
at which the corresponding parabolic structure is unstable
is an invariant of a family of genus $2$ Lawson symmetric CMC surfaces.

This shows that the two families
can be distinguished
by the number of unstable parabolic structures inside the unit disk,
$0$ for family I and $1$ for family II.
While this number might be the only integer invariant of
(Lawson symmetric) families of CMC surfaces of genus $2$, the
authors expect that in the study of the moduli space of compact CMC surfaces,
the number of strictly semi-stable parabolic structures inside the unit disk
will also play a role. In fact, the
existence of a pair of isomorphic strictly semi-stable parabolic
structures at $\lambda_1$ and $1/{\overline{\lambda_1}}$ reflected across
the unit circle would imply the existence of parallel line subbundles at $\lambda_1$ and $1/{\overline{\lambda_1}},$
yielding the existence of a nontrivial
isospectral deformation of the CMC surface, see~\cite{He3} and references therein. For none of our numerically computed surfaces
does there exist a parallel line subbundle for a spectral parameter $\lambda$ away from the unit circle. Therefore, these 
surfaces cannot have any
isospectral deformations.

\end{document}